\newtheorem{theorem}{Theorem}[section]
\newtheorem{corollary}[theorem]{Corollary}
\newtheorem{proposition}[theorem]{Proposition}
\newtheorem{lemma}[theorem]{Lemma}
\newtheorem{lettertheorem}{Theorem}
\newtheorem{lettercorollary}{Corollary}
\newtheorem{conjecture}[theorem]{Conjecture}
\newtheorem{remark}[theorem]{Remark}
\newtheorem{definition}[theorem]{Definition}
\newtheorem{question*}{Question}
\newtheorem{problem*}{Problem}
\theoremstyle{definition}
\numberwithin{equation}{section}
\newcommand{\GL}{\mathrm{GL}}
\newcommand{\rk}{\mathrm{rk}}
\newcommand{\Li}{\mathrm{Li}}
\renewcommand{\Im}{\mathrm{Im}}
\renewcommand{\pmod}[1]{\, (\mathrm{mod} {\, #1})}
\newcommand{\ord}{\mathop{\mathrm{ord}}}
\renewcommand{\Re}{\mathrm{Re}}
\def\res{\mathop{\mathrm{res}}}
\patchcmd{\section}{\scshape}{\bfseries}{}{}
\renewcommand{\@secnumfont}{\bfseries}
\makeatletter\newcommand{\tpmod}[1]{{\@displayfalse\pmod{#1}}}
\begin{document}

\title{Euler Product Asymptotics for $L$-functions of elliptic curves}

\author{Arshay Sheth}

\address[Arshay Sheth]{Mathematics Institute, Zeeman Building, University of Warwick, Coventry, CV4 7AL, UK}
\curraddr{School of Mathematics, Tata Institute of Fundamental Research, Homi Bhabha Road,
Mumbai - 400005, India}

\thanks{The author was supported by funding from the European Research Council under the European Union’s Horizon 2020 research and innovation programme (Grant agreement No. 101001051 — Shimura varieties and the Birch--Swinnerton-Dyer conjecture) during the writing of this paper.}
\email{arshay.sheth@warwick.ac.uk, asheth@math.tifr.res.in }
\urladdr{\href{https://sites.google.com/view/arshaysheth/home}{https://sites.google.com/view/arshaysheth/home}}

\begin{abstract}
Let $E/\mathbb Q$ be an elliptic curve and for each prime $p$, let $N_p$ denote the number of points of $E$ modulo $p$. The original version of the Birch and Swinnerton-Dyer conjecture asserts that $\prod \limits _{p \leq x} \frac{N_p}{p} \sim C (\log x) ^{\text{rank}(E(\mathbb Q))}$ as $x \to \infty$. Goldfeld (1982) showed that this conjecture implies both the Riemann Hypothesis for $L(E, s)$ and the modern formulation of the conjecture i.e. that $\ord \limits _{s=1} L(E, s)= \text{rank}(E(\mathbb Q))$. In this paper, we prove that if we let $r=\ord \limits_{s=1}L(E, s)$, then under the assumption of the Riemann Hypothesis for $L(E, s)$, 
we have that $\prod \limits _{p \leq x} \frac{N_p}{p}  \sim C (\log x)^r
$
for all $x$ outside a set of finite logarithmic measure. As corollaries, we recover not only Goldfeld's result, but we also prove a result in the direction of the converse. Our method of proof is based on establishing the asymptotic behaviour of partial Euler products of $L(E, s)$ in the right-half of the critical strip.  
\end{abstract}

\maketitle

\section{Introduction}\label{Introduction}

 Let $E/\mathbb Q$ be an elliptic curve with conductor $N_E$, and for each prime $p$, let $N_p=\#E_{\textrm{ns}}(\mathbb F_p)$, where $E_{\textrm{ns}}(\mathbb F_p)$ denotes the set of non-singular $\mathbb F_p$-rational points on a minimal Weierstrass model for $E$ at $p$.  We denote by $\rk(E)$ the rank of the Mordell-Weil group $E(\mathbb Q)$. The original version of the Birch and Swinnerton-Dyer conjecture takes the following form. 

\begin{conjecture} [Birch and Swinnerton-Dyer ~\cite{BirchSwinnertonDyer1965}]  \label{OBSD}
We have that $$\prod_{p \leq x} \frac{N_p}{p}  \sim C (\log x)^{\rk (E) }$$ as $x \to \infty$ for some constant $C$ depending on $E$. 
\end{conjecture}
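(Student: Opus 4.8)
The plan is to reduce Conjecture~\ref{OBSD} to an analytic statement about the partial Euler products of $L(E,s)$, and the first move is the elementary identity $N_p/p=L_p(1)^{-1}$ for every prime $p$, where $L_p(s)$ is the Euler factor of $L(E,s)$ at $p$: for $p\nmid N_E$ this is $(p+1-a_p)/p$, and the bad-prime cases $a_p\in\{0,\pm1\}$ are checked directly. Thus $P(x):=\prod_{p\le x}N_p/p=\prod_{p\le x}L_p(1)^{-1}$ and $\log P(x)=-\sum_{p\le x}\log L_p(1)$ is a truncation of the Dirichlet series for $-\log L(E,1)$, which diverges to $+\infty$ exactly when $r:=\ord_{s=1}L(E,s)\ge1$. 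Since the conjecture is phrased with $\rk(E)$, I would invoke the rank part of the modern Birch--Swinnerton-Dyer conjecture, $\rk(E)=r$, so that it suffices to prove $P(x)\sim C(\log x)^{r}$.

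The analytic heart is to read the growth of $-\sum_{p\le x}\log L_p(1)$ off the behaviour of $\log L(E,s)$ near $s=1$. Expanding $-\log L_p(s)=-\sum_{k\ge1}(\alpha_p^{k}+\beta_p^{k})/(k\,p^{ks})$ in the Satake parameters ($\alpha_p\beta_p=p$, $\alpha_p+\beta_p=a_p$), the terms with $k\ge3$ converge absolutely near $\Re s=1$; modularity makes $L(E,s)$ entire, so $\log L(E,s)=r\log(s-1)+(\text{holomorphic near }s=1)$; and the degree-two part contributes an \emph{extra} half-logarithm, $\sum_p(\alpha_p^{2}+\beta_p^{2})/(2p^{2s})=\tfrac12\log(s-1)+(\text{holomorphic})$, since $\alpha_p^{2}+\beta_p^{2}=(\alpha_p^{2}+p+\beta_p^{2})-p$ with $\sum_p(\alpha_p^{2}+p+\beta_p^{2})p^{-2s}$ regular at $s=1$ (edge behaviour of the symmetric-square $L$-function) and $-\sum_pp^{1-2s}\sim\log(s-1)$. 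Subtracting, the degree-one sum satisfies $\sum_pa_pp^{-s}=(r-\tfrac12)\log(s-1)+(\text{holomorphic near }s=1)$, the half-integer exponent being the source of the notorious extra factor $\sqrt2$ in the constant. Under the Riemann Hypothesis for $L(E,s)$, $\log L(E,s)$ continues holomorphically to $\Re s>1$ away from $s=1$ with polynomial vertical growth, so a contour-shift (Perron-type) argument converts this local information into $\log P(x)=r\log\log x+\log C+(\text{error})$, with $C$ the explicit constant built from $L^{(r)}(E,1)/r!$, $e^{\gamma r}$ and $\sqrt2$, and the error governed by a sum $\sum_{\rho}x^{\rho-1}/(\rho-1)$ over the nontrivial zeros $\rho=1+i\gamma$.

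The step I expect to be the main obstacle is obtaining a clean asymptotic valid for \emph{every} $x$: the zero sum does not tend to $0$ pointwise but oscillates with bounded, nonvanishing amplitude, so RH alone yields $\log P(x)=r\log\log x+\log C+o(1)$ only after integrating against $dt/t$ and discarding a set of $x$ of finite logarithmic measure. Off that set one gets $P(x)\sim C(\log x)^{r}$, hence $P(x)\sim C(\log x)^{\rk(E)}$ by the rank conjecture; but removing the exceptional set entirely --- which is precisely what Conjecture~\ref{OBSD} demands --- is strictly stronger than RH, since Goldfeld~\cite{Goldfeld1982} proved the converse (existence of $\lim_{x\to\infty}P(x)/(\log x)^{m}$ for some $m$ already forces RH, with $m=r$), and eliminating the exceptional set would require quantitative non-oscillation for $\sum_{|\gamma|\le T}x^{i\gamma}/(i\gamma)$ that is beyond current reach. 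The realistic deliverable is therefore the conditional asymptotic off a set of finite logarithmic measure, from which one still recovers Goldfeld's implication (a genuine limit must agree with the limit along the complement of the exceptional set) together with a result towards its converse (the asymptotic along a sufficiently dense set of $x$ determines $\ord_{s=1}L(E,s)$, hence $\rk(E)$ under BSD).
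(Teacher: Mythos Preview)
You correctly recognise that Conjecture~\ref{OBSD} is not proved in the paper (nor anywhere else): the paper establishes only the conditional statement that, assuming RH for $L(E,s)$, the asymptotic holds with exponent $r=\ord_{s=1}L(E,s)$ off a set of finite logarithmic measure (Theorem~\ref{appln}), and then identifies $r$ with $\rk(E)$ under the modern BSD conjecture (Corollary~\ref{cor1}). Your proposal arrives at exactly this deliverable, and your diagnosis of why the exceptional set cannot presently be removed is accurate.

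Your route to the conditional result, however, differs from the paper's. You work directly at $s=1$: decompose $-\log L_p(1)$ by degree in the Satake parameters, read off the singularity of $\log L(E,s)$ at $s=1$, extract the half-logarithm (hence the $\sqrt2$) from the degree-two piece via the symmetric-square edge behaviour, and then run a Perron-type argument with a zero-sum error to be averaged out. This is essentially the line taken by Conrad~\cite{Conrad2005} and Kuo--Murty~\cite{KuoMurty2005}; the Mertens-type asymptotic for the degree-two sum that you invoke is exactly the paper's Lemma~\ref{sqrt2}, attributed there to~\cite{KuoMurty2005}. The paper instead first proves an asymptotic for the partial Euler product valid throughout the strip $1<\Re(s)<3/2$ (Theorem~\ref{mainthm}), via an explicit formula for $\sum_{n\le x}b_n n^{-s}$ integrated in $s$, keeping the degree-two contribution as an explicit term $U_s(x)$; only then does it specialise to $s=1+1/x$ and let $x\to\infty$. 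Both approaches must control the zero-sum, and both reduce this to the refined estimate $\psi_E(x)=O(x(\log\log x)^2)$ off a set of finite logarithmic measure, which the paper obtains by Gallagher's $L^2$ method (Theorem~\ref{refinement}); your ``integrating against $dt/t$'' is the same mechanism. The paper's detour through the critical strip buys a result of independent interest (an elliptic-curve analogue of Ramanujan's formula for $\prod_{p\le x}(1-p^{-s})^{-1}$), while your approach is more direct for the BSD application but closer to the existing literature.
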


The modern formulation of the conjecture is expressed in terms of the $L$-function $L(E, s)$ associated to $E$.  If we set $a_p=p+1-N_p$ if $p \nmid N_E$ and $a_p=p-N_p$ if $p |N_E$, then the $L$-function of $E$ is defined for $\textrm{Re}(s)>3/2$ by 
$$
L(E, s)= \prod_{p |N_E} (1-a_p p^{-s})^{-1} \cdot   \prod_{p \nmid N_E} (1-a_p p^{-s}+p^{1-2s})^{-1}. 
$$
By the work of Wiles ~\cite{Wiles1995} and Breuil--Conrad--Diamond--Taylor ~\cite{BreuilConradDiamondTaylor2001},  $L(E, s)$ admits an analytic continuation to the entire complex plane and satisfies a functional equation which relates $L(E, s)$ to $L(E, 2-s)$. More precisely,  if we define 
$
\Lambda(E, s) = N_E^{s/2} (2 \pi)^{-s} \Gamma(s) L(E, s)
$
the functional equation asserts that 
\begin{equation} \label{func}
\Lambda(E, s) = w_E \Lambda(E, 2-s) 
\end{equation}
for some $w_E \in \{\pm 1\}$. 
In particular, the critical strip of $L(E, s)$ is the region $\frac{1}{2}< \Re(s)< \frac{3}{2}$ and the critical line of $L(E, s)$ is the line $\Re(s)=1$. The Riemann Hypothesis for $L(E, s)$ asserts that all the non-trivial zeros of $L(E, s)$ lie on the line $\textrm{Re}(s)=1$.   We now state the modern formulation of the Birch and Swinnerton-Dyer conjecture. 

\begin{conjecture} \label{BSD}
Let $E/\mathbb Q$ be an elliptic curve. 
Then
$$
\ord_{s=1} L(E, s)= \rk (E). 
$$
\end{conjecture}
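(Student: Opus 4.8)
\Cref{BSD} is, of course, open, and unconditionally there is nothing to prove; the realistic goal — and the content of this paper — is to establish it \emph{conditionally}, under the Riemann Hypothesis for $L(E,s)$ and modulo a comparison of partial Euler products that is valid only for $x$ outside a set of finite logarithmic measure. This simultaneously recovers Goldfeld's implication ``\Cref{OBSD} $\Rightarrow$ \Cref{BSD}'' and supplies a result in the direction of the converse. The starting point is the elementary identity $L_p(1) = p/N_p$, which holds at every prime: at a good prime $1 - a_p p^{-1} + p^{-1} = (p+1-a_p)/p = N_p/p$, and at a bad prime $1 - a_p p^{-1} = (p - a_p)/p = N_p/p$. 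Hence
$$
\prod_{p\le x}\frac{N_p}{p} = \Big(\prod_{p\le x} L_p(1)\Big)^{-1},
$$
so \Cref{OBSD} says precisely that the partial Euler product of $L(E,s)$ at the central point $s = 1$ decays like $C^{-1}(\log x)^{-\rk(E)}$, and the whole problem is to compute the rate of decay of $\prod_{p\le x} L_p(1)$ in terms of the analytic behaviour of $L(E,s)$ at $s=1$.

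First I would take logarithms and Taylor-expand each local factor, using $u_p := (a_p-1)/p = O(p^{-1/2})$ (Hasse) to get
$$
\log\prod_{p\le x} L_p(1) = \sum_{p\le x}\frac{a_p}{p} \;-\; \sum_{p\le x}\frac1p \;+\; \frac12\sum_{p\le x}\frac{a_p^2}{p^2} \;+\; O(1),
$$
the $O(1)$ absorbing an absolutely convergent tail. The middle sum is Mertens' theorem, $\log\log x + M + o(1)$. The last sum is governed by the symmetric-square $L$-function: using the known analytic properties of $L(\mathrm{Sym}^2 E, s)$ at the edge of its half-plane of convergence, a Mertens-type argument gives $\sum_{p\le x} a_p^2/p^2 = \log\log x + c_E + o(1)$, and this is where the ``$\tfrac12$'' (hence the $\sqrt 2$ in Goldfeld's constant) is born. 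Everything then rests on the first sum $S(x) := \sum_{p\le x} a_p/p$, which I would obtain by partial summation from $\vartheta_E(x) := \sum_{p\le x} a_p\log p$, and $\vartheta_E$ in turn from the Chebyshev-type function $\psi_E(x) := \sum_{n\le x}\Lambda_E(n)$, where $-L'/L(E,s) = \sum_n \Lambda_E(n)n^{-s}$.

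The analytic heart is the explicit formula for $\psi_E(x)$ under RH. Moving the contour in Perron's formula across the critical line: the zero of order $r$ at $s=1$ produces the main term $-rx$; the contribution of $\Lambda_E(p^2) = (a_p^2-2p)\log p$ (again controlled by $\mathrm{Sym}^2$, since $a_p^2$ has mean $p$) produces $-\tfrac12 x + o(x)$; and the remaining non-central zeros $\rho = 1 + i\gamma$ produce an oscillatory term $\mathcal{E}(x) = -\sum_{\gamma\ne 0} x^{1+i\gamma}/(1+i\gamma)$. Thus $\vartheta_E(x) = (\tfrac12 - r)x + \mathcal{E}(x)$, and partial summation yields $S(x) = (\tfrac12 - r)\log\log x + B_E + \mathcal{E}_1(x)$ with $\mathcal{E}_1$ oscillatory and $B_E$ built from $L^{(r)}(E,1)/r!$, $e^{-\gamma r}$ and the symmetric-square data. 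Re-assembling the displayed expansion, the three $\log\log x$-terms collapse — $(\tfrac12 - r) - 1 + \tfrac12 = -r$ — leaving
$$
\prod_{p\le x} L_p(1) = C_E\,(\log x)^{-r}\exp\big(\mathcal{E}_1(x)\big)
$$
for an explicit constant $C_E$. The crux is the behaviour of $\mathcal{E}_1(x)$: it is essentially $(\log x)^{-1}\sum_\gamma x^{i\gamma}/(1+i\gamma)$, a sum that need not converge and whose size is governed by the vertical distribution of the zeros. One has two routes: either impose a hypothesis on the zeros (of the flavour of Conrad's Hypothesis~H — presumably the Hypothesis~$\mathrm{H}_\delta$ introduced later in the paper) forcing $\mathcal{E}_1(x)\to 0$ identically; or, keeping only RH, prove a mean-square estimate — writing $x = e^t$, $\int_2^X|\mathcal{E}_1(e^t)|^2\,dt \ll 1$ uniformly in $X$, via near-orthogonality of $t\mapsto e^{i\gamma t}$ together with $\sum_\gamma|\gamma|^{-2}<\infty$ — which forces $\mathcal{E}_1(x)\to 0$ as $x\to\infty$ off a set of finite logarithmic measure. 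This second route is exactly what produces the caveat in the abstract, and the mean-square bound is the main obstacle.

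Finally, assemble: off the exceptional set,
$$
\prod_{p\le x}\frac{N_p}{p} = \Big(\prod_{p\le x} L_p(1)\Big)^{-1} \sim C_E^{-1}(\log x)^{\,r}, \qquad r = \ord_{s=1} L(E,s).
$$
Matching this against \Cref{OBSD}: if that conjecture holds, comparing the powers of $\log x$ forces $\rk(E) = r$, which is \Cref{BSD} — Goldfeld's theorem, now recovered; and since $C_E^{-1}$ is completely explicit, matching the leading constants as well gives a numerical relation which, read backwards, yields the advertised partial converse. I expect two steps to be genuinely delicate: (i) promoting the explicit-formula computation above to a version uniform in $s$, so that the Euler-product asymptotics hold throughout the right half $1 \le \Re(s) < \tfrac32$ of the critical strip, as the abstract indicates; and (ii) the mean-square estimate for $\mathcal{E}_1$ together with the clean extraction of the finite-logarithmic-measure exceptional set.
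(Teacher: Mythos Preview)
The statement is a conjecture; the paper does not prove it, and you correctly say so up front. What you actually sketch is a route to the paper's genuine results (Theorem~\ref{appln} and Corollaries~\ref{cor2}--\ref{cor1}), and that route is valid but genuinely different from the paper's.

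The paper does \emph{not} work directly at $s=1$. Instead it first proves an asymptotic for the partial Euler product uniformly for $1<\Re(s)<\tfrac32$ (Theorem~\ref{mainthm}), obtained by integrating the explicit formula for $\sum_{n\le x} b_n n^{-s}$ along the horizontal ray from $s$ to $\infty$; only afterwards does it specialise to $s=1+\tfrac1x$ and let $x\to\infty$. Your approach --- Taylor-expand $\log L_p(1)$ at $s=1$, isolate $\sum a_p/p$, feed in the explicit formula for $\psi_E$ and partial-sum --- is the more classical ``Mertens at the central point'' line (closer in spirit to Kuo--Murty~\cite{KuoMurty2005}). It is shorter for the BSD application and makes the $\sqrt2$ and the collapse of the $\log\log x$ terms more transparent; the paper's detour through the strip, on the other hand, buys Theorem~\ref{mainthm} as a standalone result of Ramanujan--Kaneko type, which is one of the paper's advertised contributions and which your approach does not yield. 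For the oscillatory zero-sum both routes land in the same place: the paper uses Gallagher's lemma (Lemma~\ref{gallagher}) to prove $\psi_E(x)=O\big(x(\log\log x)^2\big)$ off a set of finite logarithmic measure (Theorem~\ref{refinement}), which is exactly the mean-square-plus-exceptional-set mechanism you describe for $\mathcal E_1$.

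Two small points. First, your explicit-formula paragraph conflates two separate steps: the contour shift gives $\psi_E(x)=-\sum_\rho x^\rho/\rho+\cdots$, and the ``$-\tfrac12 x$'' from $\Lambda_E(p^2)$ enters only when you pass from $\psi_E$ to $\vartheta_E$ by subtracting the prime-square contribution --- it is not a residue in Perron's formula. Second, the paper contains no Hypothesis~$\mathrm H_\delta$; the only hypothesis invoked is RH for $L(E,s)$, and the exceptional set is unavoidable on that assumption alone (the paper notes that the unconditional alternative is to assume $\psi_E(x)=o(x\log x)$, as in \cite{Conrad2005,KuoMurty2005}).
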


Goldfeld \cite{Goldfeld1982} showed that Conjecture \ref{OBSD} implies both Conjecture \ref{BSD} and the Riemann Hypothesis for $L(E, s)$; moreover, he also gave an explicit expression for the constant $C$ appearing in Conjecture \ref{OBSD}.  

\begin{theorem}[Goldfeld \cite{Goldfeld1982}]
Let $E/\mathbb Q$ be an elliptic curve. If 
$$
\prod_{p \leq x} \frac{N_p}{p}  \sim C (\log x)^{\rk (E) }
$$
as $x \to \infty$, then $L(E, s)$ satisfies the Riemann Hypothesis and $\ord \limits _{s=1} L(E, s)= \rk(E)$. Moreover, if we set $r:=\ord \limits _{s=1}L(E,s )$,  then 
$$
C= \frac{r!}{L^{(r)} (E, 1) } \cdot \sqrt 2 e^{r \gamma}, 
$$
where $\gamma$ is  Euler's constant and $L^{(r)}(E, s)$ is the $r$-th derivative of $L(E, s)$.  

\end{theorem}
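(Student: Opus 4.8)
The plan is to convert the hypothesis into an asymptotic estimate for a sum over primes, and then to read off analytic information about $L(E,s)$ from that estimate. First I would write, for a prime $p\nmid N_E$, the Satake parameters as $\alpha_p=\sqrt p\,e^{i\theta_p}$ and $\beta_p=\sqrt p\,e^{-i\theta_p}$ — legitimate by the Hasse bound $|a_p|\le 2\sqrt p$ — so that $a_p=2\sqrt p\cos\theta_p$ and $N_p/p=(1-\alpha_p/p)(1-\beta_p/p)$. Expanding the logarithm,
\[
\log\prod_{p\le x}\frac{N_p}{p}=-\sum_{p\le x}\sum_{k\ge 1}\frac{2\cos(k\theta_p)}{k\,p^{k/2}}+O(1)=-S_1(x)-S_2(x)+c_0+o(1),
\]
where $S_1(x)=\sum_{p\le x}\frac{a_p}{p}$ and $S_2(x)=\sum_{p\le x}\frac{\cos(2\theta_p)}{p}$ (sums over good primes $p\le x$), the $O(1)$ accounts for the finitely many bad primes, and $c_0$ is the constant obtained by bundling the bad Euler factors together with the absolutely convergent tail $\sum_p\sum_{k\ge 3}\frac{2\cos(k\theta_p)}{k\,p^{k/2}}$. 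Since $|\cos(2\theta_p)|\le 1$, Mertens' theorem gives the trivial bound $S_2(x)=O(\log\log x)$; combining this with the hypothesis, which reads $\log\prod_{p\le x}(N_p/p)=\rk(E)\log\log x+O(1)$, forces $S_1(x)=\sum_{p\le x}\frac{a_p}{p}=O(\log\log x)$.

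To obtain the Riemann Hypothesis I would use the identity $L(E,s)=\exp\bigl(\phi(s)+\psi(s)\bigr)$, valid for $\Re(s)>3/2$, where $\phi(s)=\sum_p a_p p^{-s}=\sum_p\frac{a_p}{p}\,p^{-(s-1)}$ and $\psi(s)=\sum_p\sum_{k\ge 2}\frac{\alpha_p^k+\beta_p^k}{k}\,p^{-ks}$. The bound $|\alpha_p^k+\beta_p^k|\le 2p^{k/2}$ shows that $\psi$ converges absolutely, hence is holomorphic, on $\Re(s)>1$, while partial summation applied to $S_1(x)=O(\log\log x)$ shows that $\phi$ converges and is holomorphic on $\Re(s)>1$. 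Consequently $\exp(\phi(s)+\psi(s))$ is holomorphic and non-vanishing on $\Re(s)>1$ and agrees with the entire function $L(E,s)$ for $\Re(s)>3/2$; by the identity theorem it equals $L(E,s)$ throughout the connected region $\Re(s)>1$, so $L(E,s)$ has no zeros there. As $L(E,s)\ne 0$ already for $\Re(s)>3/2$, the functional equation $\Lambda(E,s)=w_E\Lambda(E,2-s)$ then forces every nontrivial zero onto the line $\Re(s)=1$, which is the Riemann Hypothesis for $L(E,s)$.

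For the order of vanishing and the constant I would need the precise asymptotics of $S_2(x)$. Writing $\frac{\cos(2\theta_p)}{p}=\frac{a_p^2}{2p^2}-\frac1p$ and invoking the standard analytic properties of the Rankin--Selberg convolution $L(E\times E,s)=\zeta(s-1)L(\mathrm{Sym}^2 E,s)$ — meromorphic continuation, a simple pole at $s=2$, and a classical zero-free region just to the left of $\Re(s)=2$ — a Mertens-type argument gives $S_2(x)=-\tfrac12\log\log x+c_2+o(1)$ with an explicit $c_2$ depending on $L(\mathrm{Sym}^2 E,2)$, the bad Euler factors and $\gamma$. With the first paragraph this yields $S_1(x)=(\tfrac12-\rk(E))\log\log x+B+o(1)$, and partial summation upgrades it to $\phi(1+w)=(\rk(E)-\tfrac12)\log w+B'+o(1)$ as $w\to 0^+$, the constant $\gamma$ entering through $w\int_e^\infty(\log\log t)\,t^{-1-w}\,dt\to\log(1/w)-\gamma$. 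Analyzing $\psi$ near $s=1$ in the same way — its $k=2$ term $\tfrac12\sum_p(\alpha_p^2+\beta_p^2)p^{-2s}$ has a logarithmic singularity at $s=1$ coming from the pole of $\zeta(2s-1)$, equal to $\tfrac12\log w+\tfrac12\log 2+\cdots$, which is where the factor $\sqrt2$ originates — gives $\psi(1+w)=\tfrac12\log w+\psi_0+o(1)$. Adding the two,
\[
\log L(E,1+w)=\rk(E)\,\log w+(B'+\psi_0)+o(1)\qquad(w\to 0^+).
\]
Since $L(E,s)=\exp(\phi(s)+\psi(s))$ is non-vanishing for $\Re(s)>1$, the entire function $L(E,s)$ also satisfies $\log L(E,1+w)=\ord_{s=1}L(E,s)\cdot\log w+\log\!\bigl(L^{(r)}(E,1)/r!\bigr)+o(1)$ with $r=\ord_{s=1}L(E,s)$. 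Comparing the coefficients of $\log w$ forces $\ord_{s=1}L(E,s)=\rk(E)$ (Conjecture~\ref{BSD}), and comparing the constant terms — using the hypothesis to express $B'$ through $\log C$ and collecting the contributions of $c_0$, $c_2$, $\psi_0$ and the various $\gamma$'s and $\log 2$'s — produces Goldfeld's formula $C=\frac{r!}{L^{(r)}(E,1)}\sqrt2\,e^{r\gamma}$.

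The main obstacle is the analytic input in the third paragraph: one needs the meromorphic continuation of the Rankin--Selberg / symmetric-square $L$-function, the exact location and order of its pole at $s=2$, and a zero-free region near the edge $\Re(s)=2$, and one must push the Mertens-type estimate for $S_2(x)$ all the way to error term $o(1)$ (not merely $O(1)$) in order to pin down the constant $C$ exactly. The qualitative conclusions are cheaper: the Riemann Hypothesis uses only $S_2(x)=O(\log\log x)$, and the identity $\ord_{s=1}L(E,s)=\rk(E)$ uses only $S_2(x)=-\tfrac12\log\log x+O(1)$.
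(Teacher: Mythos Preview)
Your argument is essentially correct and follows the line of Conrad and Kuo--Murty rather than the paper's own route. The deduction of the Riemann Hypothesis via partial summation from $S_1(x)=O(\log\log x)$ is exactly what the paper sketches in the proof of Corollary~\ref{cor2}. Where you diverge is in obtaining $\ord_{s=1}L(E,s)=\rk(E)$ and the constant $C$: you work directly at $s=1+w$ with $w\to 0^+$, splitting $\log L(E,1+w)=\phi(1+w)+\psi(1+w)$ and feeding in the Mertens-type asymptotic for $S_2$ (this is precisely Equation~\eqref{Mertens}) to handle the $k=2$ piece of $\psi$.

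The paper instead first proves a general asymptotic for partial Euler products throughout the half-strip $1<\Re(s)<3/2$ (Theorem~\ref{mainthm}), using the explicit formula of Theorem~\ref{explicit} and carrying the zero-sum in the term $R_s(x)$; it then specializes to $s=1+1/x$ in Theorem~\ref{appln}, where the $\Li$-term produces both $(\log x)^r$ and the factor $e^{r\gamma}$, the $U_s$-term produces the $\sqrt 2$ via Lemma~\ref{sqrt2}, and the delicate point is showing $R_s(x)=o(1)$, which forces the refined bound $\psi_E(x)=O(x(\log\log x)^2)$ off a thin set (Theorem~\ref{refinement}, via Gallagher's method). Your approach is more elementary and avoids the explicit formula and Gallagher entirely; the paper's approach buys a formula valid throughout the right half of the critical strip rather than just at the central point, and it isolates precisely which analytic quantity (the size of $\psi_E(x)$, equivalently the oscillation of the zero-sum) governs whether the original BSD asymptotic holds for \emph{all} $x$.

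One caution on your sketch: the constant-tracking in the third paragraph is schematic, and attributing the $\sqrt 2$ to ``the pole of $\zeta(2s-1)$'' is a little off---in both approaches the $\sqrt 2$ ultimately comes from the Mertens constant $-\tfrac12\log\log x$ in $\sum_{p\le x}(\alpha_p^2+\beta_p^2)/(2p^2)$, and in the paper specifically from the difference $U_1(x)=S_2(x)-S_2(\sqrt x)\to -\tfrac12\log 2$. In a full write-up you would also need to check that the bad-prime contributions hidden in your $c_0$ cancel against the bad-prime pieces of $\phi$ and $\psi$, and that the auxiliary constants $c_2$, $\psi_0$ genuinely drop out of the final formula; they do, but that bookkeeping is where most of the remaining work lies.
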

It is natural to ask, assuming the Riemann Hypothesis for $L(E, s)$ if necessary, whether the converse is true \textit{i.e.} whether Conjecture \ref{BSD} implies Conjecture \ref{OBSD}.

\subsection{Main results of the paper}
In this paper, we treat this question in a more general context by studying the behaviour of partial Euler products of $L$-functions of elliptic curves in the right-half of their critical strip. Our first main result establishes the asymptotic behaviour of these partial Euler products. 
To state our result, we first introduce the following notation: we set $r=\ord \limits_{s=1} L(E, s)$,  and let $\alpha_p$ and $\beta_p$ denote the Frobenius eigenvalues of $E$ at $p$. We let $ U_s(x)= \sum \limits _{\substack{\sqrt x < p \leq x \\ p \nmid  N_E}} \frac{(\alpha_p^2+\beta_p^2) }{2p^{2s}} $ and $
 R_s(x) = \frac{1}{\log x} \sum \limits_{ \rho \neq 1 } \frac{x^{\rho-s}}{ \rho-s}+ \frac{1}{ \log x} \sum \limits_{\rho \neq 1} \int_{s}^{\infty} \frac{x^{\rho-z}}{(\rho-z)^2}dz$, where the sums in $R_s(x)$ are taken over all non-trivial zeros $\rho=\beta+i \gamma$ of $L(E, s)$ (excluding $\rho=1$) counted with multiplicity and are interpreted as $\lim \limits_{T \to \infty} \sum \limits  _{ |\gamma| \leq T}$. For $s \in \mathbb C$ with $\Re(s)>1$ and $x \geq 2$, we let $I_s(x)= \int_{s}^{\infty} \frac{x^{1-z}}{1-z} dz$,  where the integral is taken along the horizontal straight line starting at $s$.

\begin{lettertheorem}[Theorem \ref{mainthm}] \label{asymptotic}
Assume the Riemann Hypothesis holds for $L(E, s)$. Then for a complex number $s \in \mathbb C$ with $1< \Re(s)< \frac{3}{2}$,  we have that 
\begin{equation*}
\prod_ {\substack{p \leq x \\ p | N_E}}(1-a_pp^{-s})^{-1} \cdot  \prod_ {\substack{p \leq x \\ p \nmid N_E}} (1-a_pp^{-s}+p^{1-2s})^{-1}  = L(E, s)  \exp \left (-r I_s(x) - R_s(x) 
+  U_s(x)+ O \left ( \frac{\log x}{x^{1/6}} \right ) \right),  \\
\end{equation*}
where the implied constant in the big O term above depends only on the elliptic curve $E$ and is independent of $s$. 
\end{lettertheorem}

Our theorem is an instance of the general paradigm that even though Euler products of $L$-functions are generally valid only to the right of the critical strip,  there is a strong sense in which they should also continue to persist inside the critical strip. Indeed, 
an analogous asymptotic for partial Euler products of the Riemann zeta function in the critical strip had been obtained by Ramanujan \cite {Ramanujan1915} during his study of highly composite numbers (see Remark \ref{Ramanujan}), which was recently generalized by Kaneko \cite{Kaneko2022} to the case of  Dirichlet $L$-functions.  The key idea in the proof of Theorem \ref{asymptotic}, as well as in the results of Ramanujan and Kaneko, is to develop a suitable version of an explicit formula (in the sense of analytic number theory) for the relevant $L$-function, that allows one to make a clear connection to the partial Euler products in question. We refer to Harper's Bourbaki survey \cite{Harper2019} (especially Principle 1.3 therein and the subsequent discussion) as well as the expository article \cite{KanekoKoyamaKurokawa2022} by Kaneko--Koyama--Kurokawa  for a more detailed discussion of the study of Euler products of $L$-functions in the critical strip. 

We use Theorem \ref{asymptotic} as a key tool to study the relations between the original and modern formulations of the Birch and Swinnerton--Dyer conjecture.  To state our results, we first introduce the following definition. 

\begin{definition}
Let $S \subseteq \mathbb R_{\geq 2}$ be a measurable subset of the real numbers. The logarithmic measure of $S$ is defined to be 
$$
\mu^{\times}(S)= \int_{S} \frac{dt}{t}. 
$$
\end{definition}

\begin{lettertheorem}[Theorem \ref{appln}] \label{app1} 
Assume the Riemann Hypothesis for $L(E, s)$. Then  there exists a subset $S \subseteq \mathbb R_{\geq 2}$ of finite logarithmic measure such that 
$$
\prod_{p \leq x} \frac{N_p}{p}  \sim C (\log x)^r   \hspace{2mm}  \text{ as }   x \rightarrow \infty \text{ with } x \not \in S , 
$$
where $\displaystyle r=\ord_{s=1} L(E, s)$, $C= \frac{r!}{L^{(r)} (E, 1) } \cdot \sqrt 2 e^{r \gamma}$, $\gamma$ is Euler's constant and $L^{ (r)}(E, s)$ is the $r$-th derivative of $L(E, s)$. 
\end{lettertheorem}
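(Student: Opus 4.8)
The plan is to deduce \cref{app1} from \cref{asymptotic} by passing to the point $s=1$. The starting observation is that at $s=1$ the local Euler factors of $L(E,s)$ are exactly the quantities appearing in \cref{OBSD}: for $p\mid N_E$ one has $(1-a_pp^{-1})^{-1}=p/N_p$, and for $p\nmid N_E$ one has $(1-a_pp^{-1}+p^{-1})^{-1}=p/N_p$, since $N_p=p+1-a_p$ (resp. $N_p=p-a_p$). Hence $\prod_{p\le x}N_p/p$ is the reciprocal of the partial Euler product in \cref{asymptotic} evaluated at $s=1$. As each local factor is holomorphic and, because $N_p\ge 1$, non-vanishing near $s=1$, this partial product is continuous there, so its value at $s=1$ equals the limit as $s\to 1^+$ (along the reals, with $x$ fixed) of the right-hand side of \cref{asymptotic}.

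I would compute this limit termwise. From the Taylor expansion $L(E,s)=\frac{L^{(r)}(E,1)}{r!}(s-1)^r+O((s-1)^{r+1})$ and the classical expansion $\Li(x^{1-s})=\gamma+\log((s-1)\log x)+O((s-1)\log x)$ as $s\to 1^+$, the product $L(E,s)\exp(-r\Li(x^{1-s}))$ tends to $\frac{L^{(r)}(E,1)}{r!}\cdot\frac{e^{-r\gamma}}{(\log x)^r}$; the finite sum $U_s(x)$ tends to $U_1(x)$; the sums over zeros in $R_s(x)$ tend to $R_1(x)$ (this requires interchanging $\lim_{s\to 1^+}$ with the symmetric limit over zeros, justified by pairing each zero $\rho=1+i\gamma$ with the conjugate zero $1-i\gamma$, or else one simply re-runs the proof of \cref{asymptotic} directly at $s=1$); and the error term remains $O(\log x/x^{1/6})$ by the uniformity in $s$ built into \cref{asymptotic}. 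This yields
\[
\prod_{p\le x}\frac{N_p}{p}=\frac{r!}{L^{(r)}(E,1)}\,e^{r\gamma}(\log x)^r\exp\!\left(R_1(x)-U_1(x)+O\!\left(\frac{\log x}{x^{1/6}}\right)\right).
\]

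Next I would identify the limits of $U_1(x)$ and $R_1(x)$ as $x\to\infty$. Using $\alpha_p^2+\beta_p^2=a_p^2-2p$ gives $U_1(x)=\tfrac12\sum_{\sqrt x<p\le x,\,p\nmid N_E}a_p^2/p^2-\sum_{\sqrt x<p\le x,\,p\nmid N_E}1/p$; by Mertens' theorem the second sum tends to $\log 2$, while the Rankin--Selberg estimate $\sum_{p\le x}a_p^2/p^2=\log\log x+O(1)$, a consequence of the analytic properties of $\zeta(s)L(\mathrm{Sym}^2E,s)$, forces the first sum to tend to $\tfrac12\log 2$; hence $U_1(x)\to-\tfrac12\log 2$ unconditionally and $\exp(-U_1(x))\to\sqrt 2$, which is exactly where the factor $\sqrt 2$ in $C$ originates. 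For $R_1(x)$: under RH every non-trivial zero is $\rho=1+i\gamma$, so the second piece of $R_1(x)$, namely $\frac{1}{\log x}\sum_{\rho\ne 1}\int_1^\infty x^{\rho-z}(\rho-z)^{-2}\,dz$, is $\ll\frac{1}{\log x}\sum_{\rho\ne1}\int_1^\infty x^{1-z}|\rho-z|^{-2}\,dz\ll(\log x)^{-2}$ using the convergence of $\sum_\gamma(1+\gamma^2)^{-1}$, and hence tends to $0$ with no exceptional set. Only the first piece, $\frac{1}{\log x}\sum_{\rho\ne1}x^{i\gamma}/(i\gamma)$, is delicate, and it is not expected to tend to $0$ for every $x$ (heuristically it is large when $x$ is near a prime power).

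The main obstacle is a Cramér-type second-moment estimate controlling this last term. Writing $x=e^u$ and $h(u)=\sum_{\rho\ne1}e^{i\gamma u}/(i\gamma)$ — the symmetric limit, which converges once conjugate zeros are paired into $\sum_{\gamma>0}2\sin(\gamma u)/\gamma$, the numbers $\gamma$ being bounded away from $0$ — the plan is to prove $\int_0^U|h(u)|^2\,du=O(U)$; the diagonal contribution is $U\sum_\gamma\gamma^{-2}=O(U)$ since $\sum_\gamma\gamma^{-2}<\infty$ by the Riemann--von Mangoldt zero count, and the off-diagonal (after truncating the zeros at height $\asymp U$) must be shown to be of lower order, which is the principal technical hurdle and which I would handle by the standard mean-value machinery for sums over zeros — a Montgomery--Vaughan-type inequality together with the classical upper bounds for the number of pairs of zeros lying close together, so as to tame possible near-coincidences of zeros. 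Partial summation then converts $\int_0^U|h(u)|^2\,du=O(U)$ into $\int_2^\infty|R_1(x)|^2\,\frac{dx}{x}<\infty$. Finally, choosing $T_k\uparrow\infty$ with $\int_{T_k}^\infty|R_1(x)|^2\,\frac{dx}{x}<8^{-k}$ and setting $S_k=\{\,x\ge T_k:|R_1(x)|>2^{-k}\,\}$ and $S=\bigcup_k S_k$, Chebyshev's inequality gives $\mu^{\times}(S_k)\le 4^k\cdot 8^{-k}=2^{-k}$, so $\mu^{\times}(S)\le 1<\infty$, and $R_1(x)\to 0$ as $x\to\infty$ through $x\notin S$. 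Combined with $U_1(x)\to-\tfrac12\log 2$ and the vanishing of the error term, the exponential factor in the displayed identity tends to $\sqrt 2$, giving $\prod_{p\le x}N_p/p\sim C(\log x)^r$ with $C=\frac{r!}{L^{(r)}(E,1)}\sqrt 2\,e^{r\gamma}$ for all $x\notin S$.
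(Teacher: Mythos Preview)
Your overall strategy is sound and reaches the same conclusion, but it differs from the paper's proof in two respects worth noting.

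First, where you let $s\to 1^+$ with $x$ fixed and only afterwards send $x\to\infty$, the paper instead couples the two limits by setting $s=1+1/x$ and letting $x\to\infty$. Your route forces you to justify interchanging $\lim_{s\to 1^+}$ with the symmetric limit over zeros, and your fallback of ``re-running the proof of \cref{asymptotic} directly at $s=1$'' is not immediately available: the underlying explicit formula (Theorem~\ref{explicit}) requires $L(E,s)\ne 0$, which fails at $s=1$ when $r>0$. The paper's coupling sidesteps this entirely.

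Second, and more substantively, your handling of the zero sum differs in execution. The paper rewrites $\sum_{\rho\ne 1}x^{\rho-s}/(\rho-s)$ in terms of $\psi_E(x)$ via the Riemann--von~Mangoldt explicit formula and then invokes Theorem~\ref{refinement}: under RH, $\psi_E(x)=O(x(\log\log x)^2)$ off a set of finite logarithmic measure, proved using Gallagher's mean-value inequality (Lemma~\ref{gallagher}). You instead go for a direct Cram\'er-type bound $\int_0^U|h(u)|^2\,du=O(U)$ followed by Chebyshev. These are essentially equivalent --- indeed $h(u)=-\psi_E(e^u)/e^u+O(1)$, and the Gallagher argument in the paper already yields $\int_X^{eX}|\psi_E(x)|^2\,x^{-3}\,dx=O(1)$, which upon dyadic summation gives your $O(U)$ bound. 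However, your proposed justification via a Montgomery--Vaughan inequality together with ``upper bounds for the number of pairs of zeros lying close together'' is a genuine gap: Montgomery--Vaughan introduces a factor $\delta_\gamma^{-1}$ (the reciprocal nearest-neighbour gap), and bounding $\sum_\gamma\gamma^{-2}\delta_\gamma^{-1}$ would require control on small zero gaps that is not available even under RH. Gallagher's lemma, which compares to short local averages of the coefficients and needs only the Riemann--von~Mangoldt zero count, is precisely the device that avoids this, and is what the paper uses.
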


We briefly explain the proof of Theorem \ref{app1}. The key idea is to let $s$ tend to $1$ in Theorem \ref{asymptotic} in a controlled manner dependent on $x$; more precisely, we set $s=1+\frac{1}{x}$  and let $x \to \infty$.  A brief calculation shows that the left hand side of Theorem \ref{asymptotic} is asymptotic to $\prod_{p \leq x} \frac{p}{N_p}$.  The main work in the proof is then estimating the contributions from the various terms on the right hand side.   The term $I_s(x)$ contributes the main term $(\log x)^{r}$ in Theorem \ref{app1} (we show in Corollary \ref{ramanujancor} that $I_s(x)$ can be expressed in terms of the logarithmic integral for real $s$), while the term $U_s(x)$ contributes the factor of $\sqrt 2$ appearing in the constant $C$. This is the reason why we chose to keep the $U_s(x)$ term in its exact shape, rather than absorbing it into an error term.  The delicate issue in our proof is to handle the contribution coming from the zeros of $L(E, s)$ in the term $R_s(x)$.  By another explicit formula argument, we reduce the problem to estimating the sum  $\psi_E(x)=\sum  _{\substack{p^k \leq x \\ p\nmid N_E}} (\alpha_p^k+\beta_p^k) \log p$, where as above $\alpha_p$ and $\beta_p$ are the Frobenius eigenvalues at $p$.  The Riemann Hypothesis for $L(E, s)$ is equivalent to $\psi_E(x)=O(x (\log x)^2)$.  However, by a factor of $\log x$, this estimate is not good enough for our proof to carry through. Thus, in Section \ref{sec3} of the paper, we use a method of Gallagher \cite{Gallagher1980} to obtain, conditional on the Riemann Hypothesis for $L(E, s)$,  the slightly refined estimate $\psi_E(x)= O(x (\log \log x)^2)$ outside a set of finite logarithmic measure; this is the essential reason why the set $S$ of finite logarithmic measure appears in the statement of Theorem \ref{app1}.  The fact that the estimate coming from the Riemann Hypothesis is inadequate for our purposes is consistent with the results of \cite{Conrad2005} and \cite{KuoMurty2005}; we briefly discuss this in Section \ref{comparison} below. 
As a corollary to Theorem \ref{app1}, we recover Goldfeld's result \cite{Goldfeld1982} that Conjecture \ref{OBSD} implies Conjecture \ref{BSD}. 

\begin{lettercorollary}[Corollary \ref{cor2}] \label{OBSDimpliesBSD}
Let $E/\mathbb Q$ be an elliptic curve. Suppose that $$\prod_{p \leq x} \frac{N_p}{p}  \sim C (\log x)^{\rk(E) }.$$ Then 
$ \displaystyle{
\ord_{s=1} L(E, s)= \rk (E) }. 
$

\end{lettercorollary}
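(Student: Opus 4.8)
The plan is to obtain this as a short deduction from Theorem \ref{app1}, with the hypothesis $\prod_{p\le x} N_p/p \sim C_0(\log x)^{\rk(E)}$ (for some constant $C_0\ne 0$) serving as the extra datum that pins down $r=\ord_{s=1}L(E,s)$.

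First I would check that Theorem \ref{app1} applies: by Goldfeld's theorem \cite{Goldfeld1982}, the hypothesis already implies that $L(E,s)$ satisfies the Riemann Hypothesis. Granting this, Theorem \ref{app1} yields a set $S\subseteq\mathbb{R}_{\ge 2}$ with $\mu^{\times}(S)<\infty$ such that
\[
\prod_{p\le x}\frac{N_p}{p}\sim C(\log x)^r\qquad\text{for all }x\notin S,
\]
where $r=\ord_{s=1}L(E,s)$ and $C=\frac{r!}{L^{(r)}(E,1)}\sqrt{2}\,e^{r\gamma}$; here $L^{(r)}(E,1)\ne 0$ by the definition of $r$, so $C\ne 0$.

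Next I would compare the two asymptotics. Since $\mu^{\times}(S)<\infty$, the complement $\mathbb{R}_{\ge 2}\setminus S$ is unbounded --- otherwise $S$ would contain a half-line $[M,\infty)$ and have infinite logarithmic measure --- so I may choose $x_n\to\infty$ with $x_n\notin S$. Along this sequence both the hypothesis and the conclusion of Theorem \ref{app1} are valid, so $C_0(\log x_n)^{\rk(E)}\sim C(\log x_n)^r$, that is,
\[
\frac{C_0}{C}\,(\log x_n)^{\rk(E)-r}\longrightarrow 1\qquad(n\to\infty).
\]
As $\log x_n\to\infty$ and $C_0,C$ are nonzero constants, this forces $\rk(E)-r=0$, i.e. $\rk(E)=\ord_{s=1}L(E,s)$, which is the assertion; it moreover forces $C_0=C$, thereby also recovering Goldfeld's formula for the constant.

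I do not expect a serious obstacle internal to this corollary: once Theorem \ref{app1} is available the argument is the soft comparison above, the only point needing care being that the conclusion of Theorem \ref{app1} holds only off the exceptional set $S$, which is harmless since $\mu^{\times}(S)<\infty$. The substantive work lies upstream, in Theorem \ref{app1} --- hence in Theorem \ref{asymptotic} and the Gallagher-type bound $\psi_E(x)=O(x(\log\log x)^2)$ off a set of finite logarithmic measure. The one external ingredient is the fact that the hypothesis implies the Riemann Hypothesis for $L(E,s)$; if one wishes to avoid citing \cite{Goldfeld1982} for it, the task becomes to re-prove directly that the prescribed growth of $\prod_{p\le x} N_p/p$ precludes zeros of $L(E,s)$ in the region $1<\Re(s)<\frac{3}{2}$, after which the functional equation $\Lambda(E,s)=w_E\Lambda(E,2-s)$ delivers the rest.
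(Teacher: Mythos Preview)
Your proposal is correct and follows essentially the same route as the paper: invoke the fact that the hypothesis implies the Riemann Hypothesis for $L(E,s)$, apply Theorem~\ref{app1}, and compare the two asymptotics. The paper simply says the conclusion ``follows directly from Theorem~\ref{appln}'' once RH is known, whereas you spell out the comparison via a sequence $x_n\to\infty$ avoiding $S$; conversely, the paper includes for completeness a short self-contained proof (via $\sum_{n\le x}c_n=O(\log\log x)$ and partial summation) that the hypothesis forces $L(E,s)\ne 0$ for $\Re(s)>1$, while you cite Goldfeld for this and only sketch the alternative at the end.
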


Moreover, we also prove a result in the direction of the converse; the Riemann Hypothesis for $L(E, s)$ and  Conjecture \ref{BSD} implies Conjecture $\ref{OBSD}$ off a set of finite logarithmic measure.

\begin{lettercorollary}[Corollary \ref{cor1}] \label{corB}
Let $E/\mathbb Q$ be an elliptic curve. Assume the Riemann Hypothesis for $L(E, s)$ and that $\ord \limits _{s=1} L(E, s)= \rk (E)$. Then there exists a subset $S \subseteq \mathbb R_{\geq 2}$ of finite logarithmic measure such that  
$$
\prod_{p \leq x} \frac{N_p}{p}  \sim C (\log x)^{\rk(E)} \hspace{2mm}  \text{ as }   x \rightarrow \infty \text{ with } x \not \in S,  
$$
where $C= \frac{r!}{L^{(r)} (E, 1) } \cdot \sqrt 2 e^{r \gamma}$, $\gamma$ is Euler's constant and $L^{ (r)}(E, s)$ is the $r$-th derivative of $L(E, s)$ with $r=\ord \limits _{s=1}L(E, s)$. 

\end{lettercorollary}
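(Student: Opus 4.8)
The plan is to deduce Corollary~\ref{corB} directly from Theorem~\ref{app1}, which already contains all of the analytic work. Indeed, Theorem~\ref{app1} asserts, under the Riemann Hypothesis for $L(E,s)$ \emph{alone}, that there is a subset $S \subseteq \mathbb{R}_{\geq 2}$ of finite logarithmic measure such that for all $x \notin S$,
$$\prod_{p \leq x} \frac{N_p}{p} \sim C (\log x)^{r},$$
where $r = \ord_{s=1} L(E,s)$ and $C = \frac{r!}{L^{(r)}(E,1)} \cdot \sqrt{2}\, e^{r\gamma}$. The only formal discrepancy between this statement and the one we wish to prove is that the exponent of $\log x$ is there recorded as $r$, whereas Corollary~\ref{corB} records it as $\rk(E)$.

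The single additional input is the standing hypothesis of the corollary: we are assuming $\ord_{s=1} L(E,s) = \rk(E)$, i.e.\ $r = \rk(E)$. Substituting this equality into the conclusion of Theorem~\ref{app1} — into the exponent, where $(\log x)^{r} = (\log x)^{\rk(E)}$, and (trivially, as it is already phrased in terms of $r$) into the constant $C$ — yields precisely the assertion of Corollary~\ref{corB}, with the very same exceptional set $S$. There is therefore nothing further to verify.

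I do not expect any genuine obstacle here, since the substantive content has been discharged in the proof of Theorem~\ref{app1}: the controlled limit $s = 1 + \tfrac{1}{x} \to 1$ applied to Theorem~\ref{asymptotic}, the calculation showing the left-hand side is asymptotic to $\prod_{p\le x} N_p/p$, the extraction of the main term $(\log x)^r$ from $-r\,\Li(x^{1-s})$ and of the factor $\sqrt 2$ from $U_s(x)$, and above all the treatment of the zero-contribution $R_s(x)$ via Gallagher's method (Section~\ref{sec3}), which is responsible for the exceptional set. Corollary~\ref{corB} is simply the reformulation of Theorem~\ref{app1} under the modern Birch--Swinnerton-Dyer hypothesis, and in that guise it is the natural partial converse to Goldfeld's theorem: combined with Corollary~\ref{OBSDimpliesBSD}, it shows that, modulo the Riemann Hypothesis for $L(E,s)$ and a set of finite logarithmic measure, Conjecture~\ref{OBSD} and Conjecture~\ref{BSD} are equivalent.
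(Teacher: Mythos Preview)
Your proof is correct and is essentially identical to the paper's own argument: the paper's proof of Corollary~\ref{cor1} is a single sentence stating that the result follows from Theorem~\ref{appln} once one identifies $\ord_{s=1} L(E,s)$ with $\rk(E)$. Your additional paragraph summarizing where the analytic content of Theorem~\ref{app1} comes from is accurate commentary but not part of the deduction itself.
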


\subsection{Comparison with other work} \label{comparison}
Besides the work of Goldfeld \cite{Goldfeld1982} mentioned above, there is also related work by Conrad \cite{Conrad2005} and Kuo--Murty \cite{KuoMurty2005} on studying Conjecture \ref{OBSD} and its consequences. 
The new feature in our work is to study the asymptotic behaviour of the partial Euler products of $L(E, s)$ in the right-half of the critical strip, as opposed to studying its asymptotics just at the point $s=1$, as in the statement of Conjecture \ref{OBSD}. Working in this wider region allows us to deduce not only Goldfeld's result as a corollary of our main theorems, but also, as we briefly explain below, a result in \cite{Conrad2005} and \cite{KuoMurty2005}.  In \cite{Conrad2005} and \cite{KuoMurty2005}, Conrad and Kuo--Murty independently show that $
\prod_{p \leq x} \frac{N_p}{p}  \sim C (\log x)^r, 
$  is equivalent to the fact that $\psi_E(x)=o(x \log x)$. The latter estimate, while deeper than what can at present be obtained from the Riemann Hypothesis, is indeed plausible (we refer to \cite[page 290]{Conrad2005} for more details). 
Indeed, as mentioned above, we prove in Section \ref{sec3} that under the assumption of the Riemann Hypothesis, this estimate holds outside a set of finite logarithmic measure . The proof of Theorem \ref{app1} works verbatim, by replacing the estimate in Section \ref{sec3} with $\psi_E(x)=o(x \log x)$, to obtain the backward implication of the result of \cite{Conrad2005} and \cite{KuoMurty2005}.  In particular, assuming the estimate $\psi_E(x)=o(x \log x)$ in lieu of the Riemann Hypothesis also allows us to dispense with the set $S$ appearing in Corollary \ref{corB} . 

We point out that an important technique in our work that is common to many of the works cited above is the use of explicit formulas. However, as a technical comment, we mention that the explicit formulas used in our work are related directly to partial sums of Frobenius eigenvalues and in particular, unlike the explicit formula used in \cite{Goldfeld1982},  we do not need to use an auxiliary test-function in our versions of the explicit formula. 

Finally, we mention that Conjecture \ref{OBSD} has given rise to a general conjecture about the behaviour of partial Euler products of $L$-functions at the central point by Kurokawa et al. (see for instance \cite{KimuraKoyamaKurokawa2014} or \cite{KanekoKoyamaKurokawa2022}); the ideas introduced in this paper have been used in a sequel paper of the author \cite{Sheth2024} to study the relation between this conjecture and the Generalised Riemann Hypothesis, where our results have also been applied towards problems concerning Chebyshev's bias in the recently introduced framework of Aoki--Koyama  \cite{AokiKoyama2023}. 

\subsection*{Notational conventions} We write $f=O(g)$ or $f \ll g$ if there exists a positive constant $c$ such that $|f(z)| \leq c |g(z)|$ for all $z$ in a specified range. The constant $c$ is allowed to depend on the elliptic curve $E$.  The rank of $E$ is always denoted by $\rk(E)$ and the letter $r$ is reserved for $\ord \limits _{s=1} L(E, s)$.

\subsection*{Acknowledgements}
I would like to thank Adam Harper for very helpful discussions in connection with this paper and for feedback on a previous draft of this manuscript. I would also like to thank Nuno Arala Santos for helpful discussions regarding the proof of Theorem \ref{refinement}. Finally, I am very grateful to the anonymous referees for very helpful comments and suggestions.

\section{Partial Euler products in the critical strip}
Let $E/\mathbb Q$ be an elliptic curve. 
Since the Euler factor at a prime $p$ is quadratic for each $p \nmid N_E$, it will be convenient for us to linearize this Euler factor: for each $p \nmid N_E$, we let $\alpha_p$ and $\beta_p$ denote the roots of the polynomial $x^2-a_px +p$ and so
each Euler factor can be written as 
$$
(1-a_p p^{-s}+p^{1-2s})= (1-\alpha_p p^{-s}) \cdot (1-\beta_p p^{-s}). 
$$
The polynomial  $x^2-a_px +p$ can be interpreted as the characteristic polynomial of the $p$-power Frobenius morphism on the Tate-module $T_\ell(E)$, where $\ell \neq p$ is a prime (and so $\alpha_p$ and $\beta_p$ can be viewed as the corresponding eigenvalues). 
By the Riemann Hypothesis for elliptic curves over finite fields (see for example \cite[Theorem 2.3.1]{Sil}), we have that 
\begin{equation} \label{alphabeta}
|\alpha_p|=|\beta_p|=\sqrt p. 
\end{equation}

If $p|N_E$, it is a standard fact that  $a_p \in \{-1, 0, 1\}$.

\begin{lemma} \label{bn}
For all $s \in \mathbb C$ with $\Re(s)>3/2$, we have that 
$$
- \frac{L'(E, s)}{L(E, s)}=\sum_{n=1}^{\infty} \frac{b_n}{n^s}, 
$$
where  
\[ b_n= \begin{cases} 
      (\alpha_p^k+\beta_p^k) \cdot \log p & \textrm{ if } n=p^k \textrm{ and } p \nmid N_E \\
      a_p^{k} \cdot \log p &  \textrm{ if } n=p^k \textrm{ and } p | N_E \\
      0 & \textrm{otherwise.}
   \end{cases}
\]
\end{lemma}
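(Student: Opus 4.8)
The plan is to obtain the identity by logarithmically differentiating the Euler product for $L(E,s)$ term by term and then expanding each local factor as a geometric series. First I would record that the Euler product converges absolutely on the half-plane $\Re(s)>3/2$: for $p\nmid N_E$, \eqref{alphabeta} gives $|\alpha_p p^{-s}|=|\beta_p p^{-s}|=p^{1/2-\Re(s)}<1$, and for $p\mid N_E$ the fact that $|a_p|\le 1$ gives $|a_p p^{-s}|\le p^{-\Re(s)}<1$; moreover $\sum_p p^{1/2-\Re(s)}\log p<\infty$ in this range. Consequently $\log L(E,s)$ is given, on $\Re(s)>3/2$, by the locally uniformly convergent sum $-\sum_{p\mid N_E}\log(1-a_pp^{-s})-\sum_{p\nmid N_E}\bigl(\log(1-\alpha_pp^{-s})+\log(1-\beta_pp^{-s})\bigr)$, which may therefore be differentiated term by term.

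Carrying out the differentiation via $\frac{d}{ds}\log(1-\gamma p^{-s})=\frac{\gamma p^{-s}\log p}{1-\gamma p^{-s}}$ (valid whenever $|\gamma p^{-s}|<1$), I would obtain
\[
-\frac{L'(E,s)}{L(E,s)}=\sum_{p\mid N_E}\frac{a_p p^{-s}\log p}{1-a_p p^{-s}}+\sum_{p\nmid N_E}\left(\frac{\alpha_p p^{-s}\log p}{1-\alpha_p p^{-s}}+\frac{\beta_p p^{-s}\log p}{1-\beta_p p^{-s}}\right).
\]
Expanding each summand as a geometric series, $\frac{\gamma p^{-s}}{1-\gamma p^{-s}}=\sum_{k\ge 1}\gamma^k p^{-ks}$, turns the $p\mid N_E$ term into $\log p\sum_{k\ge1}a_p^k p^{-ks}$ and the $p\nmid N_E$ term into $\log p\sum_{k\ge1}(\alpha_p^k+\beta_p^k)p^{-ks}$. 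Re-indexing the resulting double sum over $(p,k)$ by $n=p^k$ now reads off precisely the coefficients $b_n$ in the statement.

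The only point that is not purely formal is the justification that this rearrangement into a Dirichlet series is legitimate, i.e. that $\sum_n |b_n|n^{-\sigma}<\infty$ for $\sigma=\Re(s)>3/2$. This follows from \eqref{alphabeta} and $|a_p|\le 1$: one has $|b_{p^k}|\le 2p^{k/2}\log p$ when $p\nmid N_E$ and $|b_{p^k}|\le\log p$ otherwise, whence $\sum_n|b_n|n^{-\sigma}\ll\sum_p\log p\sum_{k\ge1}p^{k(1/2-\sigma)}\ll\sum_p p^{1/2-\sigma}\log p<\infty$ (the contribution of the finitely many $p\mid N_E$ being plainly finite). I do not expect any genuine obstacle here; the content of the lemma is the bookkeeping in this last step together with the elementary fact $a_p\in\{-1,0,1\}$ for $p\mid N_E$.
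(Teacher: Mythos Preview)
Your proof is correct and is precisely the ``direct computation'' the paper alludes to; the paper itself gives no details beyond a reference to \cite[Lemma~5.1.4]{Spicer2015}. Your careful convergence check and term-by-term differentiation fill in exactly what that computation would be.
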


\begin{proof}
This follows by a direct computation, see for example ~\cite[Lemma 5.1.4]{Spicer2015}. 
\end{proof}

To prove Theorem \ref{asymptotic}, we will prove a suitable version of an explicit formula (Theorem \ref{explicit} below) that will allow us to make a clear connection to the partial Euler products of $L(E, s)$; our explicit formula is an analogue of the corresponding explicit formula for the Riemann zeta function given in \cite[Lemma 5.2]{Gor23} (whose proof is given in the Appendix of \textit{op.cit.}). As a first step, we establish the following bound for the logarithmic derivative of $L(E, s)$, whose statement and proof is inspired from \cite[Lemma 12.4]{MV07}.

\begin{proposition} \label{stirling}
Let $ \mathcal A$ denote the set of points $s \in \mathbb C$ such that $\Re(s) \leq -1$ and $|s+k| \geq \frac{1}{4}$ for all $k \in \mathbb Z$. Then 
$\displaystyle 
{ \frac{L'(E, s) }{L(E, s)} \ll \log ( |s|+1) }$ uniformly for $s \in \mathcal A$. 
\end{proposition}

\begin{proof}
By the functional equation \eqref{func} of $L(E, s)$, we have that 
$$
L(E, s)= w_E N_E^{1-s} (2 \pi)^{2s-2} \frac{\Gamma(2-s)}{\Gamma(s)} L(E, 2-s). 
$$
Applying the reflection formula for $\Gamma(s)$ and the identity $\Gamma(s+1)=s \Gamma(s)$, we obtain that 
$$
\frac{\Gamma(2-s)}{\Gamma(s)} = \frac{ \sin(\pi s)  \Gamma(2-s) \Gamma(1-s) }{\pi}=  \frac{ \sin(\pi s)  (1-s) \Gamma(1-s)^2}{\pi}. 
$$
Substituting this back into the above equation and logarithmically differentiating yields 
\begin{equation*}
\frac{L'(E, s)}{L(E, s)}= -\frac{L'(E, 2-s)}{L(E, 2-s)}-\log(N_E)+2 \log (2 \pi) -\frac{1}{(1-s)} +\pi \cot(\pi s)- 2 \frac{\Gamma'(1-s)}{\Gamma(1-s)}. 
\end{equation*}
We note that the first four terms are bounded for $s \in \mathcal A$. To handle the last two terms, we note that $
\cot(\pi s)= i+\frac{2 i}{e^{2 \pi i s}-1} \ll 1
$
since $s$ is bounded away from all the integers, and that $\frac{\Gamma'(1-s)}{\Gamma(1-s)} \ll \log(|s|+1)$ by \cite[Theorem C.1]{MV07}. This completes the proof. 
\end{proof}

\begin{theorem} \label{explicit}
Let $s \in \mathbb C \setminus{ \{1\}} $ be a complex number such that $L(E, s) \neq 0$. For $x \geq 4$ and $T \geq 2+ |\Im(s)|$, we define $R(x, T, s)$ by 
$$
\sum_{n \leq x} {}^{'} \frac{b_n}{n^s} = -r \cdot \frac{x^{1-s}}{1-s}-\frac{L'(E, s)}{L(E, s)}-\sum_{\substack{\rho \neq 1 \\ |\Im (\rho -s)| \leq T}}\frac{x^{\rho-s}}{\rho-s}+\sum_{k=0}^{\infty} \frac{x^{-k-s}}{k+s} + R(x, T, s), 
$$
where $r=\ord \limits _{s=1} L(E, s)$, the sum over $\rho$ is taken over all non-trivial zeros of $L(E, s)$ (excluding $\rho=1$), and where the prime on the summation indicates that the last term of the sum is weighted by half if $x$ is an integer.  Then 
$$
R(x, T, s) \ll (\sqrt x \log x)  x'^{-\Re(s)} \min \left \{ 1 , \frac{x}{T \langle x \rangle } \right \} + \frac{ \log^2(xT) }{T} \left(  2^{|\Re(s)| } x^{\frac{3}{2}-\Re(s)} + \frac{2^{-\Re(s) }}{\log x} \right), 
$$
where $x'$ denotes the prime power closest to $x$ that is not equal to $x$ and $\langle x \rangle :=|x-x'|$. 
\end{theorem}

\begin{proof}
We closely follow the proof of \cite[Lemma 5.2]{Gor23}. Let $\sigma_0= \max  \{ 0, \frac{3}{2}-\Re(s) \}$+ $\frac{1}{\log x}$. By using the truncated Perron's formula (\cite[Corollary 5.3]{MV07}) and applying Lemma \ref{bn}, we have that 
\begin{equation} \label{perron}
\sum_{n \leq x} {}^{'} \frac{b_n}{n^s} = 
 \frac{1}{2 \pi i} \int_{\sigma_0- i T}^{\sigma_0+ i T}  - \frac{L'(E, s+z)}{L(E, s+z)} \frac{x^z}{z} dz + E_s, 
\end{equation}
where 
$$
E_s \ll \sum_{\substack{\frac{x}{2} < n < 2x \\ n \neq x}} \frac{ |b_n| }{n^{\Re(s)} } \min \left \{ 1, \frac{x}{T |x-n| } \right \} + \frac{x^ {\sigma_0}}{T} \sum_{n=1}^{\infty} \frac{ |b_n| }{n^{\Re(s)+\sigma_0}}. 
$$

In the sum over $(x/2, 2x)$, we consider separately the case $n=x'$ and $ n \neq x'$ and use the bound in Equation $\eqref{alphabeta}$ to obtain 
\begin{align*}
E_s 
& \ll (\sqrt x \log x ) x'^{-\Re(s)} \min \left \{ 1 , \frac{x}{T \langle x \rangle } \right \} + \frac{2^{|\Re(s)|} x^{\frac{3}{2}-\Re(s)} \log^2 x }{T} +  \frac{x^ {\sigma_0}}{T} \sum_{n=1}^{\infty} \frac{ |b_n| }{n^{\Re(s)+\sigma_0}}. 
\end{align*}

To estimate the last sum above,  a brief case-by-case analysis (splitting into cases $\sigma_0+\Re(s) > \frac{5}{2}$ and $\sigma_0+\Re(s) \in (\frac{3}{2}, \frac{5}{2}]$) using the estimates $\sum_{n=1}^{\infty} \frac{|b_n|}{n^t} \ll \frac{\zeta'}{\zeta}(t-1/2)$ for $t > \frac{3}{2}$ (\cite[Lemma 5.1.7]{Spicer2015}), $\frac{\zeta'}{\zeta}(t) \asymp (t-1)^{-1}$ for $t \in (1, 2]$, and  $\frac{\zeta'}{\zeta}(t) \asymp 2^{-t}$ for $t \geq 2$ shows that $E_s$ can be absorbed in  $R(x, T, s)$. By \cite[Lemma 2.2 (i)]{Qu2007} there are $T_1, T_2 \in [T, T+1]$ such that 
\begin{equation} \label{qu}
\frac{L'}{L}(E, \sigma+ i \Im(s)- iT_2), \frac{L'}{L}(E, \sigma+ i \Im(s)+iT_1) \ll \log^2 T
\end{equation}
uniformly for $-\frac{3}{2} \leq  \sigma \leq \frac{5}{2}$.  We can extend the range of integration in Equation \eqref{perron} from $|\Im(z)| \leq T$ to $-T_2 \leq \Im(z) \leq T_1$ since the error incurred is at most $ \ll \frac{x^{\sigma_0}}{T} \left( -\frac{L'}{L} \right)(E, \sigma_0+ \Re(s)) $ which can be absorbed into the bound for $E_s$. Let $K > -\Re(s)$ denote a positive half integer which will be taken to $\infty$ and let $\mathcal C$ denote the contour consisting of three line segments connecting $\sigma_0-iT_2, -K-\Re(s)-iT_2, -K-\Re(s)+iT_1, \sigma_0+iT_1$. 
To apply Cauchy's residue theorem,  we compute the residues of the integrand inside the contour.

\begin{itemize}
\item When $s+z=1$, $\displaystyle{\res_{z=1-s} \left( \frac{L'(E, s+z)}{L(E,s+z)} \right)=r}$,   so   $\displaystyle{\res_{z=1-s} \left( - \frac{L'(E, s+z)}{L(E,s+z)} \frac{x^z}{z} \right)=-r \cdot \frac{x^{1-s}}{1-s}}$. 

\item When $z=0$, $\displaystyle{\res_{z=0} \left( - \frac{L'(E, s+z)}{L(E,s+z)} \frac{x^z}{z} \right)=-\frac{L'(E, s)}{L(E, s)}}$. 
\item If $\rho \neq 1$ is a non-trivial zero of $L(E, s)$, $\displaystyle{\res_{z=\rho-s} \left( - \frac{L'(E, s+z)}{L(E,s+z)} \frac{x^z}{z} \right)=- r_{\rho} \cdot \frac{x^{\rho-s}}{\rho-s}}$, where $r_\rho$ is the order of the zero $\rho$. Thus, counting multiplicity for all the non-trivial zeros of $L(E,s)$, we obtain a total contribution of $\displaystyle{\sum_{\substack{\rho \neq 1 \\ -T_2 < \Im (\rho -s) < T_1}}\frac{x^{\rho-s}}{\rho-s}}$. 
\item Finally, the trivial zeros of $L(E, s)$ are at $s=0, -1, -2, \ldots$ (which are all simple).  Thus, since  $\displaystyle{\res_{z=-k-s} \left( - \frac{L'(E, s+z)}{L(E,s+z)} \frac{x^z}{z} \right)=\frac{x^{-k-s}}{k+s}}$ for all $k \geq 0$, we get a total contribution of $\displaystyle{\sum_{0 \leq k < K} \frac{x^{-k-s}}{k+s}}$. 
\end{itemize}

Thus, by Cauchy's residue theorem, we obtain
\begin{align} \label{perron2}
\frac{1}{2 \pi i} & \int_{\sigma_0- i T}^{\sigma_0+ i T}  - \frac{L'(E, s+z)}{L(E, s+z)} \frac{x^z}{z} dz  = -r \cdot \frac{x^{1-s}}{1-s}-\frac{L'(E, s)}{L(E, s)}  -\sum_{\substack{\rho \neq 1 \\ -T_2 < \Im (\rho -s) < T_1}}\frac{x^{\rho-s}}{\rho-s} \nonumber \\ & + \sum_{0 \leq k <K} \frac{x^{-k-s}}{k+s}+   \frac{1}{2 \pi i} \int_{\mathcal C} - \frac{L'(E, s+z)}{L(E, s+z)} \frac{x^z}{z} dz. 
\end{align}
By using the standard fact that number of zeros $\rho$ with $T \leq  \Im(\rho) \leq T+1$ is $ \ll \log T$ (see for instance Theorem \ref{zeros} below), we can shorten the sum over $-T_2< \Im(\rho-s) <T_1$ to one over $-T \leq \Im(\rho-s) \leq T$ since the incurred error is 
$$
\ll \sum_{\Im(\rho-s) \in (T, T_1) \cup (-T_2, -T)} \frac{x^{\frac{3}{2}-\Re(s)}}{|\rho-s|} \ll \frac{x^{3/2-\Re(s)} \log T}{T},
$$
which can be absorbed into the error term for $R(x, T, s)$. We now estimate the integral over $\mathcal C$. To bound the integral on the horizontal sides, we consider separately three ranges of $\Re(z) \in [-K-\Re(s), \sigma_0]$. The contribution of $\Re(z) \in \left [-\frac{3}{2}-\Re(s), \min\{\frac{5}{2}-\Re(s), \sigma_0 \} \right]$ can be bounded using Equation \eqref{qu} to obtain 
\begin{equation*} 
\frac{1}{2 \pi i} \int_{-\frac{3}{2}-\Re(s)+iT_1}^{ \min\{\frac{5}{2}-\Re(s), \sigma_0 \}+iT_1} - \frac{L'(E, s+z)}{L(E, s+z)} \frac{x^z}{z} dz \ll \frac{\log^2 T x^{\min\{\frac{5}{2}-\Re(s), \sigma_0 \}}}{T \log x},
\end{equation*}
and the same bound holds if $T_1$ is replaced by $-T_2$. A brief calculation shows that this error term can be absorbed into the error term for $R(x, T, s)$. We  only consider the contribution from $\Re(z) \in (\frac{5}{2}-\Re(s), \sigma_0]$ when this is a non-empty interval. In this case, we again use the estimates $\frac{L'}{L}(E, t) \ll \frac{\zeta'}{\zeta}(t-1/2)$ for $t > \frac{3}{2}$ and $\frac{\zeta'}{\zeta}(t) \ll 2^{-t}$ for $t\geq 2$ to obtain that 
\begin{equation*}
\frac{1}{2 \pi i} \int_{\frac{5}{2}-\Re(s)+iT_1}^{\sigma_0+iT_1} - \frac{L'(E, s+z)}{L(E, s+z)} \frac{x^z}{z} dz \ll  \frac{2^{-\Re(s)}}{2^{\sigma_0}} \frac{x^{\sigma_0}}{T \log x} \ll \frac{2^{-\Re(s)}}{T}
\end{equation*}
 and we note that this error term can be absorbed into the error term for $R(x, T, s)$.  The same bound holds if $T_1$ is replaced by $-T_2$.
To bound the contribution of $\Re(z) \in [-K -\Re(s), -\frac{3}{2}-\Re(s)]$ we use Proposition \ref{stirling} to obtain 
\begin{equation*}
\frac{1}{2 \pi i} \int_{-K -\Re(s)+iT_1}^{ -\frac{3}{2}-\Re(s)+iT_1} - \frac{L'(E, s+z)}{L(E, s+z)} \frac{x^z}{z} dz \ll \int_{-K}^{-\frac{3}{2}} \log (T+|a|) \frac{x^{a-\Re(s)}}{T} da \ll \frac{\log T}{T} \frac{x^{- \frac{3}{2}-\Re(s) }}{\log x},
\end{equation*}
and this error term is also acceptable. The same bound holds if $T_1$ is replaced by $-T_2$.
The integral over the vertical line can also be bounded using Proposition \ref{stirling} to obtain
\begin{equation*}
\frac{1}{2 \pi i} \int_{-K-\Re(s)-iT_2}^{ -K-\Re(s)+iT_1} - \frac{L'(E, s+z)}{L(E, s+z)} \frac{x^z}{z} dz \ll \frac{\log(KT)}{K+\Re(s)} x^{-K-\Re(s)} \int_{-T_2}^{T_1} dt \ll  \frac{T \log(KT) x^{-K-\Re(s)}}{K+\Re(s)}
\end{equation*}
and this bound tends to 0 as $K$ tends to infinity. Combining Equations \eqref{perron} and \eqref{perron2} and inserting the bounds above completes the proof of the theorem.  
\end{proof}

To prove our main theorem, we will first relate the partial Euler product
\begin{equation*} 
\prod_ {\substack{p \leq x \\ p | N_E}}(1-a_pp^{-s})^{-1} \cdot  \prod_ {\substack{p \leq x \\ p \nmid N_E}}(1-a_pp^{-s}+p^{1-2s})^{-1}= \prod_ {\substack{p \leq x \\ p | N_E}}(1-a_pp^{-s})^{-1} \cdot   \prod_{\substack{p \leq x \\ p \nmid N_E}} (1-\alpha_p p^{-s})^{-1} (1-\beta_p p^{-s})^{-1}
\end{equation*}
to the series $\sum \limits_{n \leq x} \frac{b_n}{n^s}$.  We begin by noting that 
\begin{align*}  \label{deuler}
\frac{d}{ds}     \left( \log \prod_ {\substack{p \leq x \\ p | N_E}}(1-a_pp^{-s})^{-1} \prod_ {\substack{p \leq x \\ p \nmid N_E}} (1-a_pp^{-s}+p^{1-2s})^{-1} \right) = -\left (\text{I}(x)+\text{II}(x)+\text{III}(x) \right ) ,  \\
\end{align*} 
where 
\begin{equation*}
\text{I}(x) =  \sum_{\substack{p \leq x \\ p | N_E}} \frac {\log p \cdot a_p}{p^s-a_p}, \hspace{2mm} 
\text{I\hspace{-.1mm}I}(x) =  \sum_{\substack{p \leq x \\ p \nmid N_E}} \frac {\log p \cdot \alpha_p}{p^s-\alpha_p}, \hspace{2mm} 
\text{I\hspace{-.1mm}I\hspace{-.1mm}I}(x) =  \sum_{\substack{p \leq x \\ p \nmid N_E}} \frac {\log p \cdot \beta_p}{p^s-\beta_p}.
\end{equation*}

We now fix $s \in \mathbb C$ such that $\textrm{Re}(s)>1$. We write 
\begin{equation*}
 \text{I}(x) = \sum_{\substack{p \leq x \\ p | N_E}}  \frac{\log p  \cdot a_p}{p^s}+\sum_{\substack{p \leq x \\ p | N_E}}  \frac{\log p \cdot a_p^2}{p^{2s}}+ \sum_{k \geq 3} \sum_{\substack{p \leq x \\ p | N_E}} \frac{\log p  \cdot a_p^{k} }{p^{ks}}
\end{equation*}

\begin{equation*}
    \text{II}(x) =  \sum_{\substack{p \leq x \\ p \nmid N_E}}  \frac{\log p  \cdot \alpha_p}{p^s}+\sum_{\substack{p \leq x \\ p \nmid N_E}}  \frac{\log p \cdot \alpha_p^2}{p^{2s}}+ \sum_{k \geq 3} \sum_{\substack{p \leq x \\ p \nmid  N_E}} \frac{\log p  \cdot \alpha_p^{k} }{p^{ks}}
\end{equation*}

\begin{equation*} 
\text{III}(x) = \sum_{\substack{p \leq x \\ p \nmid N_E}}  \frac{\log p  \cdot \beta_p}{p^s}+\sum_{\substack{p \leq x \\ p \nmid N_E}} \frac{\log p \cdot \beta_p^2}{p^{2s}}+ \sum_{k \geq 3} \sum_{\substack{p \leq x \\ p \nmid N_E}} \frac{\log p  \cdot \beta_p^{k} }{p^{ks}}. 
\end{equation*}

On the other hand, using the definition of $b_n$ as given in Lemma \ref{bn}, 
\begin{equation*}
\sum_{n \leq x} \frac{b_n}{n^s}= {\sum_{\substack{p^k \leq x \\ p | N_E}}} \frac{a_p^k \cdot \log p}{p^{ks}} +  {\sum_{\substack{p^k \leq x \\ p \nmid  N_E}}} \frac{ (\alpha_p^k+\beta_p^k) \cdot \log p}{p^{ks}}. 
\end{equation*}

We can write the first term as
\begin{align*}
 {\sum_{\substack{p^k \leq x \\ p | N_E}}} \frac{a_p^k \cdot \log p}{p^{ks}}= {\sum_{\substack{p \leq x \\ p |  N_E}}}\frac{ \log p \cdot a_p }{p^{s}}+  \sum_{k \geq 2} {\sum_{\substack{p\leq x^{1/k} \\ p | N_E}}}\frac{ \log p \cdot a_p^k }{p^{ks}}
\end{align*}
and the second term as 
\begin{align*}
 {\sum_{\substack{p^k \leq x \\ p \nmid  N_E}}} \frac{ \log p \cdot (\alpha_p^k+\beta_p^k) }{p^{ks}} &=  {\sum_{\substack{p \leq x \\ p \nmid  N_E}}}\frac{ \log p \cdot (\alpha_p+\beta_p) }{p^{s}}+  {\sum_{\substack{p \leq \sqrt x \\ p \nmid  N_E}}}\frac{ \log p \cdot (\alpha_p^2+\beta_p^2) }{p^{2s}}+  \sum_{k \geq 3} {\sum_{\substack{p \leq x^{1/k} \\ p \nmid  N_E}}}\frac{ \log p \cdot (\alpha_p^k+\beta_p^k) }{p^{ks}}. \\
\end{align*}
Combining these observations it follows that 
\begin{align*}
\text{I}(x)+\text{II}(x)+\text{III}(x)  &= \sum_{n \leq x} \frac{b_n}{n^s}+ \sum_{\substack{\sqrt x < p \leq x \\ p \nmid  N_E}} \frac{ \log p \cdot (\alpha_p^2+\beta_p^2) }{p^{2s}}+ \sum_{k \geq 3} {\sum_{\substack{ x^{1/k}<p \leq x \\ p \nmid  N_E}}}\frac{ \log p \cdot (\alpha_p^k+\beta_p^k) }{p^{ks}} \\
&+  \sum_{k \geq 2} {\sum_{\substack{x^{1/k}<p \leq x \\ p | N_E}}}\frac{ \log p \cdot a_p^k }{p^{ks}}. 
\end{align*}
Thus, in summary, 
\begin{align}  \label{derivative}
\frac{d}{ds}     \left( \log \prod_ {\substack{p \leq x \\ p | N_E}}(1-a_pp^{-s})^{-1} \prod_ {\substack{p \leq x \\ p \nmid N_E}} (1-a_pp^{-s}+p^{1-2s})^{-1} \right) \newline
&=-\ \sum_{n \leq x} \frac{b_n}{n^s}- \sum _{\substack{\sqrt x < p \leq x \\ p \nmid  N_E}}\frac{ \log p \cdot (\alpha_p^2+\beta_p^2) }{p^{2s}} \nonumber \\
 -\sum_{k \geq 3} {\sum_{\substack{x^{1/k} <p  \leq x \\ p \nmid  N_E}}}\frac{ \log p \cdot (\alpha_p^k+\beta_p^k) }{p^{ks}}-  \sum_{k \geq 2} {\sum_{\substack{x^{1/k}<p \leq x \\ p | N_E}}}\frac{ \log p \cdot a_p^k }{p^{ks}}. 
\end{align} 

\begin{theorem} \label{mainthm} 
Assume the Riemann Hypothesis holds for $L(E, s)$. Then for a complex number $s \in \mathbb C$ with $1< \Re(s)< \frac{3}{2}$,  we have that 
\begin{equation*}
\prod_ {\substack{p \leq x \\ p | N_E}}(1-a_pp^{-s})^{-1} \cdot  \prod_ {\substack{p \leq x \\ p \nmid N_E}} (1-a_pp^{-s}+p^{1-2s})^{-1}  = L(E, s)  \exp \left (-r I_s(x) - R_s(x) 
+  U_s(x)+ O \left ( \frac{\log x}{x^{1/6}} \right ) \right),  \\
\end{equation*}
where $r=\ord \limits _{s=1} L(E, s)$,  $\displaystyle I_s(x)= \int_{s}^{\infty} \frac{x^{1-z}}{1-z} dz$,  $
\displaystyle R_s(x) = \frac{1}{\log x} \sum \limits_{ \rho \neq 1 } \frac{x^{\rho-s}}{ \rho-s}+ \frac{1}{ \log x} \sum \limits_{\rho \neq 1} \int_{s}^{\infty} \frac{x^{\rho-z}}{(\rho-z)^2}dz$ and  $ \displaystyle U_s(x)= \sum_{\substack{\sqrt x < p \leq x \\ p \nmid  N_E}} \frac{(\alpha_p^2+\beta_p^2) }{2p^{2s}} $. Here, the integral is taken along the horizontal straight line starting at $s$ and the sums are taken over all non-trivial zeros $\rho=1+i \gamma$ of $L(E, s)$ (excluding $\rho=1$) counted with multiplicity and are interpreted as $\lim \limits _{T \to \infty} \sum \limits _{ |\gamma| \leq T}$. 
\end{theorem}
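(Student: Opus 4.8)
The plan is to integrate the logarithmic derivative identity \eqref{derivative} along the horizontal ray $w=s+u$, $u\in[0,\infty)$, and then to feed in the explicit formula of Theorem \ref{explicit}. Write $P(x,w)$ for the partial Euler product on the left of \eqref{derivative}. Since $\Re(w)>1$ on the ray, \eqref{alphabeta} shows that every local factor of $P(x,w)$ tends to $1$ as $\Re(w)\to\infty$, so $\log P(x,w)\to 0$ and the integrand decays exponentially in $\Re(w)$; hence, integrating \eqref{derivative} from $s$ to $+\infty$,
\[
\log P(x,s)\;=\;\int_s^{\infty}\!\Big(\sum_{n\le x}\frac{b_n}{n^{w}}+C(x,w)\Big)\,dw,
\]
where $C(x,w)$ is the sum of the three correction terms on the right of \eqref{derivative}. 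Under the Riemann Hypothesis all non-trivial zeros of $L(E,\cdot)$ lie on $\Re(s)=1$, so the ray meets no zero (and $L(E,\cdot)$ has no pole), whence $\log L(E,w)$ is single-valued along it and $\to 0$ as $\Re(w)\to\infty$.

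Next I would substitute Theorem \ref{explicit} for $\sum_{n\le x}b_n/n^{w}$ and integrate the four resulting pieces separately. Using $\tfrac{d}{dw}\Li(x^{1-w})=-x^{1-w}/(1-w)$ (valid since $|x^{1-w}|<1$ on the ray, so no singularity of $\Li$ is met), the first piece integrates to $-r\,\Li(x^{1-s})$; the piece $-L'(E,w)/L(E,w)$ integrates to $\log L(E,s)$; and the trivial-zero series $\sum_{k\ge 0}x^{-k-w}/(k+w)$ integrates to a quantity $O(x^{-1}/\log x)$, uniformly in $s$. For the sum over non-trivial zeros, integration by parts gives, for each $\rho\ne 1$,
\[
\int_s^{\infty}\frac{x^{\rho-w}}{\rho-w}\,dw\;=\;\frac{x^{\rho-s}}{(\rho-s)\log x}+\frac{1}{\log x}\int_s^{\infty}\frac{x^{\rho-w}}{(\rho-w)^2}\,dw,
\]
and summing over $\rho$ reproduces exactly $R_s(x)$, the second sum being absolutely convergent since $\sum_{\rho}|\rho-s|^{-2}<\infty$. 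The subtle point is that the sum over zeros in Theorem \ref{explicit} is only a symmetric limit $\lim_{T\to\infty}\sum_{|\gamma|\le T}$, so I would carry out the integration by parts inside the truncated sum first and only then let $T\to\infty$, bounding the $w$-integral of the tail $\sum_{|\gamma|>T}x^{\rho-w}/(\rho-w)$ by the standard estimate $\#\{\rho:T<|\gamma|\le T+1\}\ll\log T$.

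It remains to integrate $C(x,w)$. The first correction sum, $\sum_{\sqrt x<p\le x,\,p\nmid N_E}(\log p)(\alpha_p^2+\beta_p^2)p^{-2w}$, integrates exactly to $\sum_{\sqrt x<p\le x,\,p\nmid N_E}(\alpha_p^2+\beta_p^2)/(2p^{2s})=U_s(x)$, which is why this term is kept in its precise form. For the remaining correction sums --- the prime-power contributions with $k\ge 3$ for $p\nmid N_E$ and $k\ge 2$ for $p\mid N_E$ --- I would integrate termwise and estimate using $|\alpha_p|=|\beta_p|=\sqrt p$ from \eqref{alphabeta}, $|a_p|\le 1$, and the constraint $p^{k}>x$; a short computation (summing the geometric-type tails, using the weight $1/k$ from the integration to ensure convergence in $k$) shows each is $\ll x^{5/6-\Re(s)}$, hence $\ll\log x/x^{1/6}$ uniformly for $1<\Re(s)<\tfrac{3}{2}$. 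Collecting the five contributions and exponentiating yields the stated identity, with implied constant depending only on $E$.

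The step I expect to be the main obstacle is the rigorous justification of exchanging the symmetric limit defining the zero sum with the integral $\int_s^{\infty}$, together with checking that every error estimate is uniform in $s$ throughout the strip $1<\Re(s)<\tfrac{3}{2}$ --- the bounds being most delicate as $\Re(s)\to 1^{+}$, where the exponent $\tfrac56-\Re(s)$ approaches $-\tfrac16$.
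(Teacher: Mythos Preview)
Your approach is essentially identical to the paper's: integrate \eqref{derivative} along the horizontal ray from $s$ to $\infty$, plug in the explicit formula from Theorem \ref{explicit}, and evaluate the resulting integrals piece by piece, with the $x^{1-w}/(1-w)$ term yielding $\Li(x^{1-s})$, the $L'/L$ term yielding $\log L(E,s)$, integration by parts on the zero sum yielding $R_s(x)$, and the $k\ge 3$ correction producing the $O(\log x/x^{1/6})$. You are in fact slightly more careful than the paper in one respect: you flag the need to justify swapping the symmetric limit $\lim_{T\to\infty}\sum_{|\gamma|\le T}$ with the integral $\int_s^\infty$, which the paper carries out formally without comment; your proposed fix (integrate by parts inside the truncation, then pass to the limit using $\#\{\rho:T<|\gamma|\le T+1\}\ll\log T$) is the right way to do it.
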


\begin{proof}
For all $s \in \mathbb C$ with $1< \Re(s)< \frac{3}{2}$,  we have by Equation $\eqref{derivative}$ and Theorem $\ref{explicit}$ that 
\begin{align*}
\frac{d}{ds}   & \left( \log \prod_ {\substack{p \leq x \\ p | N_E}}(1-a_pp^{-s})^{-1} \prod_ {\substack{p \leq x \\ p \nmid N_E}}  (1-a_pp^{-s}+p^{1-2s})^{-1} \right) 
= r \cdot \frac{x^{1-s}}{1-s}+\frac{L'(E, s)}{L(E, s)}+ \sum_{\substack{\rho \neq 1 \\ |\Im (\rho -s)| \leq T}}\frac{x^{\rho-s}}{\rho-s}  \\
& - \sum_{k=0}^{\infty} \frac{x^{-k-s}}{k+s } 
- R(x, T, s)- \sum _{\substack{\sqrt x < p \leq x \\ p \nmid  N_E}}\frac{ \log p \cdot (\alpha_p^2+\beta_p^2) }{p^{2s}} 
 -\sum_{k \geq 3} {\sum_{\substack{ x^{1/k}<p \leq x \\ p \nmid  N_E}}}\frac{ \log p \cdot (\alpha_p^k+\beta_p^k) }{p^{ks}} \\ & -  
 \sum_{k \geq 2} {\sum_{\substack{x^{1/k}<p \leq x \\ p | N_E}}}\frac{ \log p \cdot a_p^k }{p^{ks}} - \frac{1}{2} \frac{b_x}{x^s}, 
\end{align*} 
where in the equation above, and in what follows subsequently,  the last term is only included if $x \in \mathbb N$. We now fix $s_0 \in \mathbb C$ with $1< \Re(s_0)< \frac{3}{2}$. Integrating this equation along the horizontal straight line from $s_0$ to $\infty$, and using the fact that the sum over the zeros is a finite sum, it follows that

\begin{align*} 
& \log \prod_ {\substack{p \leq x \\ p | N_E}}(1-a_pp^{-s_0})^{-1} \prod_ {\substack{p \leq x \\ p \nmid N_E}}  (1-a_pp^{-s_0}+p^{1-2s_0})^{-1}  =  -r\cdot \int_{s_0}^{\infty}\frac{x^{1-s}}{1-s} ds-\int_{s_0}^{\infty}\frac{L'(E, s)}{L(E, s)}ds 
 \nonumber \\
&-\sum_{\substack{\rho \neq 1 \\ |\Im (\rho -s)| \leq T}}\int_{s_0}^{\infty} \frac{x^{\rho-s}}{\rho-s}ds    + \sum_{k=0}^{\infty} 
\int_{s_0}^{\infty}\frac{x^{-k-s}}{k+s}ds + \int_{s_0}^{\infty} R(x, T, s)+
 \sum_{\substack{\sqrt x < p \leq x \\ p \nmid  N_E}} \int_{s_0}^{\infty} \frac{\log p \cdot (\alpha_p^2+\beta_p^2)}{p^{2s}}ds \nonumber \\ 
&+ \sum_{k \geq 3} \sum_{\substack{x^{1/k} < p \leq x \\ p \nmid  N_E}} \int_{s_0}^{\infty} \frac{\log p \cdot (\alpha_p^k+\beta_p^k)}{p^{ks}}ds 
+ \sum_{k \geq 2} \sum_{\substack{x^{1/k} < p \leq x \\ p | N_E}} \int_{s_0}^{\infty} \frac{\log p \cdot a_p^k }{p^{ks}}ds + \frac{1}{2} \int_{s_0}^{\infty} \frac{b_x}{x^s} ds . 
\end{align*} 

Using the bound for $R(x, T, s)$ in Theorem \ref{explicit} it follows that
\begin{equation*}
\int_{s_0}^{\infty} R(x, T, s) \ll \sqrt x \cdot x'^{-\Re(s_0)} \min \left \{ 1 , \frac{x}{T \langle x \rangle } \right \} + \frac{ \log^2(xT) }{T \log x} \left(  2^{|\Re(s_0)| } x^{\frac{3}{2}-\Re(s_0)} +  2^{-\Re(s_0) }\right)
\end{equation*}

and so we can let $T \to \infty$ to obtain
\begin{align} \label{logeulerproduct} 
& \log \prod_ {\substack{p \leq x \\ p | N_E}}(1-a_pp^{-s_0})^{-1} \prod_ {\substack{p \leq x \\ p \nmid N_E}}  (1-a_pp^{-s_0}+p^{1-2s_0})^{-1}  =  -r\cdot \int_{s_0}^{\infty}\frac{x^{1-s}}{1-s} ds-\int_{s_0}^{\infty}\frac{L'(E, s)}{L(E, s)}ds 
 \nonumber \\
&-\sum_{\rho \neq 1} \int_{s_0}^{\infty} \frac{x^{\rho-s}}{\rho-s}ds   + \sum_{k=0}^{\infty}
\int_{s_0}^{\infty}\frac{x^{-k-s}}{k+s}ds +
 \sum_{\substack{\sqrt x < p \leq x \\ p \nmid  N_E}} \int_{s_0}^{\infty} \frac{\log p \cdot (\alpha_p^2+\beta_p^2)}{p^{2s}}ds \nonumber \\ 
&+ \sum_{k \geq 3} \sum_{\substack{x^{1/k} < p \leq x \\ p \nmid  N_E}} \int_{s_0}^{\infty} \frac{\log p \cdot (\alpha_p^k+\beta_p^k)}{p^{ks}}ds 
+ \sum_{k \geq 2} \sum_{\substack{x^{1/k} < p \leq x \\ p | N_E}} \int_{s_0}^{\infty} \frac{\log p \cdot a_p^k }{p^{ks}}ds+ \frac{1}{2} \int_{s_0}^{\infty} \frac{b_x}{x^s} ds. 
\end{align} 

We now analyze each of these integrals in turn.

\begin{itemize}

\item The first integral equals $I_{s_0}(x)$ by definition.

\item We have that 
\begin{align*}
\int_{s_0}^{\infty} \frac{L'(E, s)}{L(E, s)} ds= -\log L(E, s_0). 
\end{align*}

\item We have that 
\begin{align*}
\sum_{\rho \neq 1} \int_{s_0}^{\infty} \frac{x^{\rho-s}}{\rho-s}ds &= \sum_{\rho \neq 1} \left(  \frac{x^{\rho-s}}{- \log x (\rho-s) } \Biggr|_{s_0}^{\infty}+ \frac{1}{\log x} \int_{s_0}^{\infty} \frac{x^{\rho-s}}{(\rho-s)^2}ds \right )  \\
&= \frac{1}{\log x} \sum_{\rho \neq 1} \frac{x^{\rho-s_0}}{\rho-s_0}+ \frac{1}{\log x} \sum_{\rho \neq 1} \int_{s_0}^{\infty} \frac{x^{\rho-s}}{(\rho-s)^2}ds  \\
&= R_{s_0}(x) \textrm{ by definition. }
\end{align*}

\item We note that 
\begin{equation*}
\sum_{k=0}^{\infty} \frac{x^{-k-s}}{k+s}
\ll \frac{1}{x^{\Re(s)}} \sum_{k=0}^{\infty}  \frac{x^{-k}}{k+1} \ll \frac{1}{x^{\Re(s)}}. 
\end{equation*}
Thus, 
\begin{equation*}
\int_{s_0}^{\infty} \sum_{k=0}^{\infty} \frac{x^{-k-s}}{k+s} ds \ll \int_{s_0}^{\infty} \frac{d|s|}{x^{\Re(s)} }= \frac{1}{x^{\Re(s_0)} \log x}. 
\end{equation*}

\item We have that 
\begin{equation*}
\sum_{\substack{\sqrt x < p \leq x \\ p \nmid  N_E}} \int_{s_0}^{\infty} \frac{\log p \cdot (\alpha_p^2+\beta_p^2)}{p^{2s}}ds = \sum_{\substack{\sqrt x < p \leq x \\ p \nmid  N_E}} \frac{(\alpha_p^2+\beta_p^2) }{2p^{2s_0}}=U_{s_0}(x) \textrm{ by definition. }
\end{equation*}

\item  We have that 
$$
 \sum_{k \geq 3} \sum_{\substack{x^{1/k} < p \leq x \\ p \nmid  N_E}} \int_{s_0}^{\infty} \frac{\log p \cdot (\alpha_p^k+\beta_p^k)}{p^{ks}}ds =  \sum_{k \geq 3} \sum_{\substack{x^{1/k} < p \leq x \\ p \nmid  N_E}} \frac{(\alpha_p^k+\beta_p^k) }{kp^{ks_0}} 
$$
Using Equation \eqref{alphabeta}, we have that 
\begin{equation*}
\sum_{x^{1/k}<p \leq x} \frac{\alpha_p^k+\beta_p^k}{k p^k} \ll \sum_{x^{1/k}<n} \frac{1}{k n^{k/2} } \ll \int_{x^{1/k} }^{\infty} \frac{dt}{k t^{k/2} }= \frac{x^{{1/k}-1/2}}{k^2/2-k} \ll \frac{x^{1/k}}{\sqrt x \cdot k^2}. 
\end{equation*}
Now, since we have that
\begin{equation*}
\sum_{k \geq 3} \frac{x^{1/k} }{k^2} = \sum_{3 \leq k \leq \log x} \frac{x^{1/k} }{k^2} +O(1) \ll x^{1/3} \log x, 
\end{equation*}
it follows that 
$$
\sum_{k \geq 3} \sum_{\substack{ x^{1/k} < p \leq x \\ p \nmid  N_E}} \frac{(\alpha_p^k+\beta_p^k) }{kp^{ks_0}} \ll \frac{\log x}{x^{1/6}}. 
$$
\end{itemize}

Finally, note that the penultimate term in Equation \eqref{logeulerproduct} is an empty sum when $x$ is large enough, and the last term satisfies 
$$
\frac{1}{2} \int_{s_0}^{\infty} \frac{b_x}{x^s} ds \ll \frac{b_x}{x^{ \Re(s_0)} \log x} \ll  \frac{\sqrt x}{x^{ \Re(s_0) }} \ll \frac{1}{\sqrt x}
$$
so this bound can be subsumed into the previous error term. Inserting all of the above estimates into \eqref{logeulerproduct} and exponentiating proves the result. 
\end{proof}

We recall that for all $x >0 $, $\Li(x)$ is defined to be the Cauchy principal value of $\int_{0}^x \frac{dt}{\log t}$; 
for the applications in Section \ref{Section4} of the paper, we record the following corollary.

\begin{corollary} \label{ramanujancor}
Assume the Riemann Hypothesis holds for $L(E, s)$. Then for  $s \in \mathbb R$ with $1< s < \frac{3}{2}$  we have, with the notation as above, that 
\begin{equation*}
\prod_ {\substack{p \leq x \\ p | N_E}}(1-a_pp^{-s})^{-1} \cdot  \prod_ {\substack{p \leq x \\ p \nmid N_E}} (1-a_pp^{-s}+p^{1-2s})^{-1}  = L(E, s)  \exp \left (-r \cdot \Li(x^{1-s}) - R_s(x) 
+  U_s(x)+ O \left ( \frac{\log x}{x^{1/6}} \right ) \right).   \\
\end{equation*}
\end{corollary}

\begin{proof}
We only need to show that $I_s(x)=\Li(x^{1-s})$ for $s >1$. By making the substitution $v=(1-z) \log x$ and $v=\log t$, we conclude as desired that 
\begin{equation*}
I_s(x) = \int_{s}^{\infty} \frac{x^{1-z}}{1-z} dz = \int_{-\infty}^{ (1-s) \log x} \frac{e^v}{v} dv = \int_{0}^{e^{(1-s) \log x}} \frac{dt}{\log t} = \textrm{Li}(x^{1-s}).  \qedhere
\end{equation*}

\end{proof}

\begin{remark} \label{Ramanujan}
During his study of highly composite numbers,  while investigating the maximal order of the divisor function, Ramanujan established the following formula for partial Euler products of the Riemann zeta function in the right-half of the critical strip:  for all $s \in \mathbb R$ with $\frac{1}{2} < s < 1$ we have under the assumption of the Riemann Hypothesis that 
$$
\prod_{p \leq x} (1-p^{-s})^{-1} = -\zeta(s) \exp \left (  \Li (\vartheta(x)^{1-s}) +  \frac{ 2s x^{\frac{1}{2}-s}}{(2s-1) \log x }+\frac{S_s(x)}{\log x} + O \left ( \frac{x^{\frac{1}{2}-s}}{ \log (x)^2} \right) \right), 
$$
where $\vartheta(x)=\sum \limits _{p \leq x} \log p$ is Chebyshev's function and 
$
S_s(x) = - s \sum \limits_{ \rho } \frac{x^{\rho-s}}{ \rho(\rho-s)}$, the sum taken over all non-trivial zeros of $\zeta(s)$. Ramanujan's initial investigations  were published in ~\cite{Ramanujan1915}, but his complete work on the subject, including the statement and proof of the above formula, remained in his ``lost notebook",  and was only published in 1997 in ~\cite{Ramanujan1997}. Corollary \ref{ramanujancor} can be regarded as an analogous formula for partial Euler products of $L$-functions of elliptic curves. 
\end{remark}

\section{Refinement of the error term for \texorpdfstring{$\psi_E(x)$}{psiE(x)} } \label{sec3}

Let $\psi_E(x)=\sum \limits_{\substack{p^k \leq x \\ p\nmid N_E}} (\alpha_p^k+\beta_p^k) \log p$. The Riemann Hypothesis for $L(E, s)$ is equivalent to the fact that 
$\psi_E(x)=O(x (\log x)^2)$. In this section, conditional on the Riemann Hypothesis for $L(E, s)$,  we use a method due to Gallagher \cite{Gallagher1980} to improve the error term outside a set of finite logarithmic measure.  To do so, we will need a general result (see for instance \cite{Selberg 1992}) on the vertical distribution of zeros for a wide class of $L$-functions.  For our elliptic curve $L$-function, the result is the following (see also \cite [Theorem 2]{MurtyKim2023}).

\begin{theorem} \label{zeros}
Let $N_E(t)$ be the number of zeros  $\rho=\beta+ i \gamma$ of $L(E, s)$ satisfying $0<\gamma<t$.  Then 
$$
N_E(t) = \frac{\alpha_E}{\pi} t(\log t+c) + O(\log t), 
$$
where $c$ is a constant and $\alpha_E$ is a constant depending on $E$. 
\end{theorem}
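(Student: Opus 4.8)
The plan is to establish a Riemann--von Mangoldt type formula for $L(E,s)$ by the classical argument-principle method, applied to the completed $L$-function $\Lambda(E,s)=N_E^{s/2}(2\pi)^{-s}\Gamma(s)L(E,s)$. By the results of Wiles and Breuil--Conrad--Diamond--Taylor recalled above, $\Lambda(E,s)$ extends to an entire function of order $1$ whose zeros are exactly the non-trivial zeros of $L(E,s)$, and it satisfies $\Lambda(E,s)=w_E\,\Lambda(E,2-s)$; since the Dirichlet coefficients of $L(E,s)$ are real we also have $\overline{\Lambda(E,\bar s)}=\Lambda(E,s)$, so the zeros are symmetric about the point $s=1$ and about the line $\Re(s)=1$. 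Fix $\sigma_0=2$ (any real number $>3/2$ works) and, for $t$ not equal to the ordinate of a zero, let $R$ be the positively oriented rectangular contour with corners $2-\sigma_0,\ \sigma_0,\ \sigma_0+it,\ (2-\sigma_0)+it$. Since $L(E,s)$ has no zeros with $\Re(s)\ge\sigma_0$ and none with $\Re(s)\le 2-\sigma_0$, the number of zeros of $\Lambda(E,s)$ inside $R$ equals $N_E(t)$ up to an $O(1)$ term coming from the finitely many zeros, if any, on the real segment. The argument principle gives $N_E(t)=\tfrac{1}{2\pi}\,\Delta_{\partial R}\arg\Lambda(E,s)$, and the two symmetries reduce this, up to $O(1)$, to twice the variation along the right half of $\partial R$, so that $N_E(t)=\tfrac{1}{\pi}\,\Delta_{\mathcal C}\arg\Lambda(E,s)+O(1)$, where $\mathcal C$ runs from $\sigma_0$ up to $\sigma_0+it$ and then horizontally to $1+it$.

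Next I would evaluate the two pieces of $\mathcal C$. On the vertical piece $s=\sigma_0+i\tau$, $0\le\tau\le t$, the Euler product converges absolutely, so $L(E,s)$ stays in a fixed compact subset of $\mathbb{C}\setminus\{0\}$ and contributes $O(1)$ to the variation of the argument; the archimedean factor contributes $\tfrac{t}{2}\log N_E-t\log(2\pi)+\Im\log\Gamma(\sigma_0+it)$, and Stirling's formula gives $\Im\log\Gamma(\sigma_0+it)=t\log t-t+O(\log t)$. On the horizontal piece, with $t$ fixed, the archimedean factor changes the argument only by $O(1)$: the contributions of $N_E^{s/2}$ and $(2\pi)^{-s}$ are independent of $\Re(s)$, while $\arg\Gamma(\sigma+it)$ varies by $O(1)$ as $\sigma$ runs over $[1,\sigma_0]$. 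Collecting terms, $\Delta_{\mathcal C}\arg\Lambda(E,s)=t\log t+t\big(\tfrac12\log N_E-\log 2\pi-1\big)+\Delta_{\mathrm{hor}}\arg L(E,s)+O(\log t)$, which already exhibits the main term $\tfrac{\alpha_E}{\pi}\,t(\log t+c)$ with $\alpha_E=1$ and $c=\tfrac12\log N_E-\log(2\pi)-1$, provided the last quantity is $O(\log t)$.

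The heart of the matter is therefore the bound $\Delta_{\mathrm{hor}}\arg L(E,s)=O(\log t)$ for the variation of $\arg L(E,\sigma+it)$ as $\sigma$ decreases from $\sigma_0$ to $1$. This is Backlund's argument: the variation is $O(1)$ plus $\pi$ times the number of sign changes of $\Re L(E,\sigma+it)$ for $\sigma\in[1,\sigma_0]$, and this number is controlled by the number of zeros of $L(E,s)$ in a disk of bounded radius centred near $2+it$, which by Jensen's inequality (or the Borel--Carath\'eodory theorem) is $O(\log t)$ once one has a polynomial bound $|L(E,\sigma+it)|\ll(|t|+2)^{A}$ uniformly for $\sigma$ in a fixed interval. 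Such a convexity bound is classical, following from the functional equation, Stirling's formula and the Phragm\'en--Lindel\"of principle, and requires no hypothesis on the zeros. The same density estimate shows $N_E(t+1)-N_E(t)=O(\log t)$, which lets one drop the assumption that $t$ avoids ordinates of zeros at the cost of an $O(\log t)$ error and absorbs the earlier $O(1)$. Assembling the vertical and horizontal contributions then yields the stated formula.

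The main obstacle is precisely this $O(\log t)$ argument bound on the horizontal segment; by contrast, extracting the main term from Stirling's formula and handling the Euler-product side are routine. Since all the inputs — analytic continuation, the functional equation, the convexity bound, and the Backlund--Jensen counting lemma — are unconditional, Theorem~\ref{zeros} holds as stated, and indeed it is merely the specialization to $L(E,s)$ of the general Riemann--von Mangoldt formula valid throughout the Selberg class (cf.\ the references cited before the statement).
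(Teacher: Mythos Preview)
Your proof sketch is correct and follows the classical Backlund argument-principle route (contour integration of $\Lambda(E,s)$, Stirling for the gamma factor, and the Jensen/Borel--Carath\'eodory bound for the horizontal variation of $\arg L(E,\sigma+it)$). However, the paper does not actually prove Theorem~\ref{zeros}: it is quoted as a known result from the literature, with references to Selberg's general framework for $L$-functions in the Selberg class and to Kim--Murty. So there is no ``paper's own proof'' to compare against; you have supplied the standard argument that underlies those cited results, which is entirely appropriate, and your identification of $\alpha_E=1$ and $c=\tfrac12\log N_E-\log(2\pi)-1$ is consistent with what those references give.
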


\begin{corollary} \label{convergencefact}
We have that
$$
\sum_{\rho} \frac{1} { |\rho|^2 } 
$$
converges, where the sum is taken over all non-trivial zeros of $L(E, s)$. 
\end{corollary}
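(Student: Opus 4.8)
The plan is to deduce the convergence directly from the zero-counting estimate of Theorem \ref{zeros} by partial summation. First I would reduce to a one-sided sum over ordinates: since $L(E,s)$ has real Dirichlet coefficients, $\overline{L(E,s)} = L(E,\overline{s})$, so the non-trivial zeros are symmetric under $\rho \mapsto \overline{\rho}$, and hence $\sum_\rho |\rho|^{-2}$ differs by at most a factor $2$ (together with the finitely many real zeros, each counted once) from $\sum_{\rho = \beta + i\gamma,\ \gamma \geq 0} |\rho|^{-2}$. I would then split this sum at $\gamma = 1$. Only finitely many non-trivial zeros have $0 \le \gamma \le 1$: they lie in the bounded box $\{\, 0 \le \Im(s) \le 1,\ 0 \le \Re(s) \le 2 \,\}$, on which $L(E,s)$ is holomorphic and not identically zero, and none of them is $\rho = 0$ (that is a trivial zero); so their contribution to the sum is a finite constant.

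For the remaining zeros, those with $\gamma > 1$, I would use $|\rho|^2 = \beta^2 + \gamma^2 \ge \gamma^2$, so that $\sum_{\gamma > 1} |\rho|^{-2} \le \sum_{\gamma > 1} \gamma^{-2}$, and then write the last sum as a Riemann--Stieltjes integral against the counting function and integrate by parts:
$$
\sum_{\gamma > 1} \frac{1}{\gamma^2} = \int_{1}^{\infty} \frac{dN_E(t)}{t^2} = \lim_{T \to \infty} \frac{N_E(T)}{T^2} - N_E(1) + 2\int_{1}^{\infty} \frac{N_E(t)}{t^3}\, dt.
$$
By Theorem \ref{zeros} we have $N_E(t) = O(t \log t)$, so the boundary term at infinity vanishes and $\int_{1}^{\infty} N_E(t)\, t^{-3}\, dt \ll \int_{1}^{\infty} (\log t)\, t^{-2}\, dt < \infty$. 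Combining the contributions of the two ranges of $\gamma$ gives the convergence of $\sum_\rho |\rho|^{-2}$.

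There is no real obstacle here: the arithmetic input — the density bound $N_E(t) \ll t \log t$ — is precisely Theorem \ref{zeros}, and the rest is elementary partial summation. The only points requiring a line of care are the bookkeeping for conjugate pairs and the isolation of the finitely many low-lying zeros (including $\rho = 1$ itself, if it happens to be a zero, which simply contributes $|\rho|^{-2} = 1$).
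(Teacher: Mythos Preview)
Your proof is correct and follows essentially the same approach as the paper: both reduce the question to the density bound $N_E(t)\ll t\log t$ from Theorem~\ref{zeros} and conclude via $\int_1^\infty (\log t)\,t^{-2}\,dt<\infty$. The only cosmetic difference is that the paper groups the zeros into unit intervals $t<|\gamma|\le t+1$ (using $N_E(t+1)-N_E(t)\ll\log t$) rather than integrating by parts, which makes the argument a couple of lines shorter.
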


\begin{proof}
Note that 
$$
\sum_{\rho} \frac{1}{|\rho|^2} = \sum_{t=0}^{\infty} \sum_{ t < |\gamma| \leq t+1} \frac{1}{|\rho|^2}. 
$$
By Theorem \ref{zeros}, the number of terms $t<\gamma \leq t+1$ is $\ll \log t$.  Thus, 
$$
\sum_{\rho} \frac{1}{|\rho|^2} \ll \sum_{t=1}^{\infty} \frac{\log t}{t^2}<\infty 
$$ \qedhere 
\end{proof}

We will also need the following estimate on mean values of exponential sums. 

\begin{lemma} \label{gallagher}
Let $\mathcal A$ be a discrete subset of $\mathbb R$.  For each $\nu \in \mathcal A$, let $c(\nu)$ be a complex number such that the series 
$$
S(u)= \sum_{\nu \in \mathcal A} c(\nu) e^{2 \pi i \nu u} 
$$
is absolutely convergent. Then for any $\theta \in (0, 1)$, we have that
$$
\int_{-U}^{U} |S(u)|^2 du \leq  \left( \frac{\pi \theta}{\sin (\pi \theta)} \right)^{2} \int_{-\infty}^{\infty} \left | \frac{U}{\theta} \sum_{t \leq \nu \leq t+ \frac{\theta}{U}} c(\nu) \right |^2 dt .
$$
\end{lemma}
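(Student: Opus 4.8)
The plan is to pass to the Fourier side and invoke Parseval's identity, following Gallagher \cite{Gallagher1980}. Put $\delta=\theta/U$ and $T(t)=\sum_{t\le\nu\le t+\delta}c(\nu)$, so that the right-hand side of the asserted inequality is $\big(\tfrac{\pi\theta}{\sin\pi\theta}\big)^{2}\delta^{-2}\int_{\mathbb R}|T(t)|^{2}\,dt$; the goal is thus to bound $\int_{-U}^{U}|S(u)|^{2}\,du$ by this quantity. I would first record the integrability facts needed for the manipulations: because $\sum_{\nu}|c(\nu)|<\infty$, the function $S$ is bounded and $T$ is bounded by $\sum_{\nu}|c(\nu)|$, while each $\nu\in\mathcal A$ contributes to $T(t)$ only for $t$ in an interval of length $\delta$, so $\int_{\mathbb R}|T(t)|\,dt\le\delta\sum_{\nu}|c(\nu)|<\infty$. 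Hence $T\in L^{1}(\mathbb R)\cap L^{2}(\mathbb R)$ and its Fourier transform is a well-defined $L^{2}$ function.

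Next I would compute $\widehat T$ explicitly. Writing the indicator $\mathbf 1[t\le\nu\le t+\delta]$ as $\mathbf 1[\nu-\delta\le t\le\nu]$ and interchanging the (absolutely convergent) sum with the integral,
\[
\widehat T(\xi)=\int_{\mathbb R}T(t)e^{-2\pi i t\xi}\,dt=\sum_{\nu}c(\nu)\int_{\nu-\delta}^{\nu}e^{-2\pi i t\xi}\,dt=\delta\,e^{\pi i\delta\xi}\operatorname{sinc}(\delta\xi)\sum_{\nu}c(\nu)e^{-2\pi i\nu\xi},
\]
where $\operatorname{sinc}(x)=\sin(\pi x)/(\pi x)$. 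Since $S(u)=\sum_{\nu}c(\nu)e^{2\pi i\nu u}$, this says $|\widehat T(\xi)|=\delta\,|\operatorname{sinc}(\delta\xi)|\,|S(-\xi)|$, and Parseval's theorem then gives
\[
\int_{\mathbb R}|T(t)|^{2}\,dt=\int_{\mathbb R}|\widehat T(\xi)|^{2}\,d\xi=\delta^{2}\int_{\mathbb R}\operatorname{sinc}(\delta\xi)^{2}|S(-\xi)|^{2}\,d\xi\ \ge\ \delta^{2}\int_{-U}^{U}\operatorname{sinc}(\delta\xi)^{2}|S(-\xi)|^{2}\,d\xi.
\]

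To finish I would use that $\operatorname{sinc}^{2}$ is nonnegative and decreasing on $[0,1]$ --- a one-line check, since the numerator $\pi x\cos\pi x-\sin\pi x$ of $\operatorname{sinc}'(x)$ vanishes at $x=0$ and has derivative $-\pi^{2}x\sin\pi x<0$ on $(0,1)$. For $|\xi|\le U$ one has $|\delta\xi|\le\delta U=\theta<1$, hence $\operatorname{sinc}(\delta\xi)^{2}\ge\operatorname{sinc}(\theta)^{2}=\big(\tfrac{\sin\pi\theta}{\pi\theta}\big)^{2}$. Substituting this into the last display, using $\int_{-U}^{U}|S(-\xi)|^{2}\,d\xi=\int_{-U}^{U}|S(\xi)|^{2}\,d\xi$, and rearranging yields
\[
\int_{-U}^{U}|S(u)|^{2}\,du\ \le\ \Big(\frac{\pi\theta}{\sin\pi\theta}\Big)^{2}\frac{1}{\delta^{2}}\int_{\mathbb R}|T(t)|^{2}\,dt,
\]
which is the claim once one substitutes $\delta^{-1}=U/\theta$ and $T(t)=\sum_{t\le\nu\le t+\theta/U}c(\nu)$. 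There is no genuinely hard step here: the only points demanding care are the justification of the interchange of sum and integral and of Parseval under the bare hypothesis of absolute convergence (dispatched by the $L^{1}\cap L^{2}$ remark above) and the elementary monotonicity of $\operatorname{sinc}^{2}$ on $[0,1]$, so I anticipate no substantive obstacle.
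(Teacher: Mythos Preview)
Your argument is correct and is precisely Gallagher's original Fourier--Parseval proof; the paper itself does not prove the lemma but merely cites \cite{Gallagher1970} and \cite{Koyama2016}, so your write-up in fact supplies what the paper omits. One trivial remark: the relevant reference is \cite{Gallagher1970} (Lemma~1 there), not \cite{Gallagher1980}.
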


\begin{proof}
This lemma was originally proven by Gallagher (see \cite[Lemma 1]{Gallagher1970}), but the version stated above is \cite[Lemma 1]{Koyama2016}. 
\end{proof}

\begin{theorem} \label{refinement}
Assume the Riemann Hypothesis for $L(E, s)$.  Then there exists a subset $S \subseteq \mathbb R_{\geq 2}$ of finite logarithmic measure such that
$$
\psi_E(x)= O(x (\log \log x)^2) \hspace{2mm}  \text{ as }   x \rightarrow \infty \text{ with } x \not \in S.  
$$
\end{theorem}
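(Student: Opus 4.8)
The plan is to follow Gallagher's method for passing from an explicit formula (with the Riemann Hypothesis available) to an almost-everywhere improvement in the error term, using the exponential-sum mean value estimate of Lemma \ref{gallagher} together with the zero-counting estimate of Theorem \ref{zeros}. The starting point is the truncated explicit formula for $\psi_E(x)$: under RH, writing $\rho = 1 + i\gamma$, one has
\[
\psi_E(x) = x - \sum_{|\gamma| \le T} \frac{x^{1+i\gamma}}{1+i\gamma} + O\!\left( \frac{x (\log x)^2}{T} + \log x \right)
\]
for $2 \le T \le x$ (the $x$ main term here plays the role of the pole contribution; in the elliptic curve normalization the relevant ``pole'' is at $s=1$ but $L(E,s)$ is entire, so in fact the leading term is $O(1)$ and the dominant object is the zero sum — I would state the precise truncated formula carefully, deriving it from Perron's formula and a contour shift exactly as in Theorem \ref{explicit} but with a horizontal truncation). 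Setting $x = e^u$ and dividing by $x$, the problem becomes bounding the exponential sum $\sum_{|\gamma| \le T} \frac{e^{i\gamma u}}{1+i\gamma}$ in an $L^2$-averaged sense over $u$.

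First I would fix a large dyadic parameter and apply Lemma \ref{gallagher} with $\mathcal A = \{\gamma : 0 < |\gamma| \le T\}$, $c(\gamma) = \frac{1}{1+i\gamma}$, and $\theta$ a fixed constant in $(0,1)$. The right-hand side of Lemma \ref{gallagher} becomes (up to the constant $(\pi\theta/\sin\pi\theta)^2$) an integral of $\big| \frac{U}{\theta} \sum_{t \le \gamma \le t + \theta/U} \frac{1}{1+i\gamma} \big|^2\, dt$. By Theorem \ref{zeros}, the number of zeros in an interval of length $\theta/U$ around height $t$ is $O\big( \frac{\theta}{U}\log(|t|+2) + \log(|t|+2)\big) = O(\log(|t|+2))$ once $U$ is bounded below, and each summand has modulus $\ll 1/(|t|+1)$; carrying out the integration in $t$ gives a bound of the shape $U^2 \log^2 U$ or so for $\int_{-U}^U |S(u)|^2\,du$ after summing the $1/|t|^2$-type tails (this is where Corollary \ref{convergencefact} is morally used). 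Splitting $[2, X]$ in the $x$-variable, equivalently $[\log 2, \log X]$ in $u$, into dyadic blocks and summing these $L^2$ bounds, one concludes that $\sum_{|\gamma|\le T} \frac{e^{i\gamma u}}{1+i\gamma} \ll (\log u)^{?}$ for all $u$ outside a set that is small in each block; balancing the measure of the exceptional sets across blocks so that their total logarithmic measure converges, and optimizing the choice of $T$ as a function of $x$ (something like $T = x$ or $T$ a small power of $x$, to kill the $x(\log x)^2/T$ error), yields $\psi_E(x) - x \ll x (\log\log x)^2$ off a set $S$ with $\mu^\times(S) < \infty$.

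The main obstacle — and the step requiring genuine care rather than routine bookkeeping — is the passage from the $L^2$-average bound on $[-U,U]$ to a pointwise bound valid outside a set of \emph{finite logarithmic measure}. Gallagher's trick is to partition the range of $x$ into dyadic intervals $[2^j, 2^{j+1})$; on each such interval the $u$-length is $\log 2$, so Lemma \ref{gallagher} gives an $L^2$ bound there, and Chebyshev's inequality then shows that the set of $x$ in that interval where $|S| $ exceeds $\sqrt{j}\cdot(\text{target})$ has measure $\ll 2^j/j^{1+\epsilon}$ — hence logarithmic measure $\ll 1/j^{1+\epsilon}$, which is summable over $j$. One must track the dependence of all implied constants on $\theta$, $U$, and especially ensure the $\log t$ factor from Theorem \ref{zeros} is correctly propagated through the $t$-integration (it is the source of the $(\log\log x)^2$ rather than a bare $\log\log x$, after accounting for the extra logarithm in the $L^2$ estimate coming from summing over dyadic blocks), and then combine with the truncation error $O(x(\log x)^2/T)$ by choosing $T$ large enough — e.g. $T = x^2$, or $T$ any fixed power of $x$ exceeding $1$, which is harmless since the zero sum only depends on $|\gamma| \le T$ and the explicit formula error then becomes negligible. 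I would present the argument in this order: (i) truncated explicit formula; (ii) reduction to the exponential sum and substitution $x = e^u$; (iii) application of Lemma \ref{gallagher} and the $t$-integration using Theorem \ref{zeros}; (iv) dyadic decomposition, Chebyshev, and summation of exceptional logarithmic measures; (v) choice of $T$ and assembly of the final bound.
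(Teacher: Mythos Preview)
Your proposal has the right skeleton (truncated explicit formula, substitution $x=e^u$, Gallagher's lemma, Chebyshev, dyadic summation), but it is missing the one structural step that makes the argument work at the target precision $(\log\log x)^2$: you must \emph{split the zero sum} at height $\log X$ before applying Lemma~\ref{gallagher}, and treat the two pieces by different methods.

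In the paper's proof, on the dyadic block $x\in[X,eX]$ one writes
\[
\sum_{0<|\gamma|\le X}\frac{x^{\rho}}{\rho}
=\sum_{0<|\gamma|<\log X}\frac{x^{\rho}}{\rho}
+\sum_{\log X\le |\gamma|\le X}\frac{x^{\rho}}{\rho}.
\]
The short piece is bounded \emph{pointwise} using Theorem~\ref{zeros}: since $|x^{\rho}|=x$ under RH and there are $\ll\log j$ zeros with $j<|\gamma|\le j+1$,
\[
\Bigl|\sum_{0<|\gamma|<\log X}\frac{x^{\rho}}{\rho}\Bigr|
\ll x\sum_{j\le\log X}\frac{\log j}{j}
\ll x(\log\log x)^2,
\]
and this is exactly where the factor $(\log\log x)^2$ comes from. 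Only the long piece $\log X\le|\gamma|\le X$ is fed into Lemma~\ref{gallagher}; the resulting $t$-integral runs from $\log X-1$ to $X$ and gives
$\int_{\log X-1}^{X}\bigl(\tfrac{\log t}{t}\bigr)^2\,dt\ll \tfrac{(\log\log X)^2}{\log X}$. That extra decay $1/\log X$ is what makes the exceptional logarithmic measures $\ll\frac{1}{T\log^2 T}$ (with $T=\log X$) summable.

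If instead you put \emph{all} zeros $0<|\gamma|\le T$ into Gallagher's lemma as you propose, the $t$-integral is $\int_{0}^{\infty}\bigl(\tfrac{\log t}{t}\bigr)^2\,dt=O(1)$ on every block, so Chebyshev with threshold $x(\log\log x)^2$ yields $\mu^{\times}(S_T)\ll(\log T)^{-4}$, and $\sum_T(\log T)^{-4}$ diverges. Your suggested fix---taking the threshold to be $\sqrt{j}\cdot(\text{target})$ on the $j$-th block---does make the measures summable, but it proves only $\psi_E(x)=O(x(\log x)^{1/2+\varepsilon})$ off a set of finite logarithmic measure, which is weaker than the stated theorem and, crucially, too weak by a factor of $\log x$ for the application in Theorem~\ref{appln}. (Minor point: drop the main term $x$ in your truncated explicit formula; $L(E,s)$ is entire, as you note, so there is no pole contribution.)
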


\begin{proof}
Let $X$ be such that $X \leq x \ll X$. By the Riemann von-Mangoldt explicit formula for $L(E, s)$ (see \cite[Lemma 2.1]{Fiorilli2014} and \cite[Equation 1.10]{Rubenstein 2013}), we have that
\begin{equation*}
\psi_E(x) = - \sum_{\substack{\rho \\ |\gamma|  \leq X}} \frac{x^{\rho}}{\rho}+O(\log^2 X). 
\end{equation*}

By Theorem \ref{zeros}, the number of terms $t<\gamma \leq t+1$ is $\ll \log t$.  Thus, 
\begin{equation*}
\sum_{ t< |\gamma| \leq  t+1} \frac{x^\rho}{\rho}  \ll x \cdot \frac{\log t}{t} 
\end{equation*}
and hence

\begin{align} \label{gallagher12} 
\sum_{0< |\gamma|< \log X} \frac{x^\rho}{\rho} \ll X \sum_{j=1}^{ \lfloor \log X \rfloor  } \frac{\log(j+1)}{j}  \ll X \int_{1}^{\log X} \frac{\log t}{t} dt= \frac{X (\log \log X)^2}{2} \ll x( \log  \log x)^2. 
\end{align}

Let $T \leq X$. We now consider the integral 
$$
\int_X^{eX} \left| \sum_{T<\gamma \leq X} \frac{x^\rho}{\rho} \right|^2 \frac{dx}{x^3}. 
$$
By making the substitution $x=e^u$ and then $u=\frac{1}{2}+\log X+ 2\pi t$, this integral equals
$$
\int_{\log X}^{1+\log X} \left| \sum_{T<\gamma<X} \frac{e^{i \gamma u}}{\rho} \right|^2 du = 2\pi \cdot  \int_{-\frac{1}{4\pi}}^{\frac{1}{4 \pi} } \left| \sum_{T<\gamma<X} \frac{e^{i \gamma (\frac{1}{2}+\log X) }}{\rho} \cdot e^{2 \pi i \gamma t}  \right|^2 dt. 
$$
Setting $\mathcal A=\{ \gamma: T< \gamma \leq X\}$,  $\theta=\frac{1}{4 \pi }$,
and  \[c(\gamma)= 
\begin{cases}
\frac{e^{i \gamma (\frac{1}{2}+\log X)}}{\rho} & \text{if } T<\gamma \leq X \\
 0& \text{otherwise} 
\end{cases}
\]
we can apply Lemma \ref{gallagher} conclude that 
\begin{equation*}
\int_X^{eX} \left| \sum_{T<\gamma \leq X} \frac{x^\rho}{\rho} \right|^2 \frac{dx}{x^3} \ll \int_{-\infty}^{\infty}  \left( \sum_{\substack{ T <\gamma \leq X \\ t \leq \gamma \leq t+1}} \frac{1}{|\rho|}  \right)^2 dt \ll \int_{T-1}^{X}  \left( \sum_{t \leq \gamma \leq t+1} \frac{1}{|\rho|}  \right)^2 dt,  
\end{equation*}
Using again the fact that the number of terms $t<\gamma \leq t+1$ is $\ll \log t$ we get that 
\begin{align*}
 \int_{T-1}^{X}  \left( \sum_{t<\gamma<t+1} \frac{1}{|\rho|}  \right)^2 dt   & \ll  \int_{T-1}^{X}  \left( \frac{\log t}{t}  \right)^2 dt \\
 &= \frac{-\log ^2 t- 2 \log t-2}{t} \Biggr|_{T-1}^{X} \\
 & \ll \frac{\log ^2 T}{T}. 
\end{align*}
Define a set $S_T$ by setting 
$$
S_T = \{ x \in [e^T, e^{T+1}]: \left| \sum_{T < |\gamma| \leq e^T} \frac{x^\rho}{\rho} \right|  \geq x( \log \log x)^2 \}. 
$$
Setting $X=e^{T}$, we conclude from above that 
\begin{align*}
\frac{\log^2 T}{T} & \gg \int_{S_T} \left| \sum_{T<\gamma \leq X} \frac{x^\rho}{\rho} \right|^2 \frac{dx}{x^3} \\
 & \gg \int_{S_T} \frac{(\log \log x)^4}{x} dx \\
 & \gg  (\log \log e^T)^4 \int_{S_T} \frac{dx}{x} \\
 & =  (\log T)^4 \cdot \mu^{\times}(S_T). 
\end{align*}
Thus, 
$$
\mu^{\times}(S_T) \ll \frac{1}{T \log^2 T}. 
$$
Define the set 
$$
S= \bigcup_{T=2}^{\infty} S_T. 
$$
Then 
$$
\mu^{\times}(S) \ll \sum_{T=2}^{\infty} \frac{1}{T \log^2 T}< \infty. 
$$
Thus, $S$ has finite logarithmic measure and so, by definition of $S$ and by Equation \eqref{gallagher12}, we conclude that $
\psi_E(x)= O(x (\log \log x)^2 $  $\text{as }   x \rightarrow \infty \text{ with } x \not \in S$. 

\end{proof}

\section{Applications} \label{Section4}

In this section, we use our asymptotic for the partial Euler products of $L(E, s)$ in Theorem \ref{mainthm} and Corollary \ref{ramanujancor} to study the relations between the original and modern formulations of the Birch and Swinnerton-Dyer conjecture.  We begin with the following lemma. 

\begin{lemma} \label{sqrt2}
Let $U_s(x)$ be as in Theorem \ref{mainthm}. Then we have that 
$$
\lim_{x \to \infty} U_1(x) = \log \left( \frac{1}{\sqrt 2} \right). 
$$
\end{lemma}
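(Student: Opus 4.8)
\textit{Proof proposal.} The plan is to reduce $U_1(x)$ to two sums over primes in the dyadic range $(\sqrt x, x]$, one handled by Mertens' theorem and one by the equidistribution of Frobenius. First I would use $\alpha_p+\beta_p=a_p$ and $\alpha_p\beta_p=p$ (equation \eqref{alphabeta}) together with the Hecke relation $a_p^2=a_{p^2}+p$ to rewrite the summand: $\alpha_p^2+\beta_p^2=a_p^2-2p=a_{p^2}-p$, where $a_{p^2}$ is the $p^2$-th Fourier coefficient of the newform attached to $E$. Hence
\[
U_1(x)=\frac12\sum_{\substack{\sqrt x<p\le x\\ p\nmid N_E}}\frac{a_{p^2}}{p^2}\;-\;\frac12\sum_{\substack{\sqrt x<p\le x\\ p\nmid N_E}}\frac1p,
\]
and it suffices to compute the limit of each of the two sums on the right.

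For the second sum, Mertens' theorem gives $\sum_{p\le x}\frac1p=\log\log x+M+o(1)$, so that $\sum_{\sqrt x<p\le x}\frac1p=\log\log x-\log\log\sqrt x+o(1)=\log 2+o(1)$; since every prime dividing $N_E$ is $\le\sqrt x$ once $x$ is large, imposing $p\nmid N_E$ does not affect the limit. Thus this term contributes $\frac12\log 2$. The first sum is the heart of the matter, and also the main obstacle: the trivial bound $|a_{p^2}|\le 3p$ only gives $\sum_{\sqrt x<p\le x}a_{p^2}/p^2=O\big(\sum_{\sqrt x<p\le x}1/p\big)=O(1)$, so one genuinely needs cancellation, which must come from arithmetic information about the distribution of the Frobenius angles.

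I would supply this input through the symmetric square $L$-function $L(\mathrm{Sym}^2 E,s)$, which is entire and non-vanishing on the edge of its critical strip (Shimura; equivalently, one may invoke the Sato--Tate equidistribution of the angles $\theta_p$ defined by $\alpha_p=\sqrt p\,e^{i\theta_p}$). Its Dirichlet coefficient at $p$ equals $a_{p^2}/p$ (note $a_{p^2}/p=1+2\cos 2\theta_p$, whose Sato--Tate average $\int_0^\pi(1+2\cos 2\theta)\frac{2}{\pi}\sin^2\theta\,d\theta$ is $0$), so the prime number theorem for $L(\mathrm{Sym}^2 E,s)$ yields $\sum_{p\le t}\frac{a_{p^2}}{p}\log p=o(t)$. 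A first partial summation removing the weight $\log p$ gives $A(t):=\sum_{p\le t}\frac{a_{p^2}}{p}=o(t/\log t)$, and a second partial summation gives $\sum_{\sqrt x<p\le x}\frac{a_{p^2}}{p^2}=\frac{A(x)}{x}-\frac{A(\sqrt x)}{\sqrt x}+\int_{\sqrt x}^x\frac{A(t)}{t^2}\,dt$, in which the boundary terms are $o(1/\log x)$ and the integral is bounded by $\big(\sup_{t\ge\sqrt x}\lvert A(t)\log t/t\rvert\big)\int_{\sqrt x}^x\frac{dt}{t\log t}=o(1)\cdot\log 2=o(1)$; removing the finitely many primes with $p\mid N_E$ again costs nothing. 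Hence the first sum tends to $0$, and combining the two evaluations gives $\lim_{x\to\infty}U_1(x)=0-\frac12\log 2=\log\!\big(1/\sqrt 2\big)$, as claimed.
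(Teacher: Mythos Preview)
Your argument is correct. The paper's proof is much terser: it simply quotes the generalized Mertens formula
\[
\sum_{\substack{p\le x\\ p\nmid N_E}}\frac{\alpha_p^2+\beta_p^2}{2p^2}=-\tfrac12\log\log x+B+o(1)
\]
from \cite{KuoMurty2005} and subtracts the values at $x$ and $\sqrt x$; you instead unpack this formula by writing $\alpha_p^2+\beta_p^2=a_{p^2}-p$, reading off the $-\tfrac12\log\log x$ from the classical Mertens theorem, and proving convergence of $\sum_p a_{p^2}/p^2$ via the prime number theorem for $L(\mathrm{Sym}^2E,s)$. The underlying analytic input is the same (Kuo--Murty's formula also rests on the symmetric square), but your presentation is self-contained and makes the origin of the $\sqrt 2$ transparent, at the cost of a longer argument; the paper's version is a two-line citation. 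Two minor remarks: the relation $\alpha_p\beta_p=p$ is the definition of $\alpha_p,\beta_p$ as roots of $x^2-a_px+p$, not equation~\eqref{alphabeta}; and your parenthetical appeal to Sato--Tate would not literally apply in the CM case, but your main argument via the analytic properties of $L(\mathrm{Sym}^2E,s)$ does (there $\mathrm{Sym}^2$ factors as $\mathrm{Ind}_K^{\mathbb Q}(\psi^2)\oplus\chi_K$, so the relevant $L$-function is still entire and non-vanishing at the edge).
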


\begin{proof}
This follows from the fact that (see \cite[page 332]{KuoMurty2005}) 
\begin{equation} \label{Mertens}
  \sum_{\substack{p \leq x \\ p \nmid  N_E}} \frac{(\alpha_p^2+\beta_p^2) }{2p^{2}}= -\frac{1}{2} \log \log x+ B +o(1), 
\end{equation}
where $B$ is a constant, since we then obtain
\begin{align*}
U_1(x) &= -\frac{1}{2} \log \log x+ \frac{1}{2} \log \log \sqrt x+ o(1) \\ &= \log \left( \frac{1}{\sqrt 2} \right)+o(1). 
\end{align*}

Equation \eqref{Mertens} is a generalized version of Mertens' theorem; we refer to \cite[Lemma 3.5]{Sheth2024} for a more general version of this statement statement from which it can also be deduced. 

\end{proof}

\begin{theorem} \label{appln}
Assume the Riemann Hypothesis for $L(E, s)$. Then  there exists a subset $S \subseteq \mathbb R_{\geq 2}$ of finite logarithmic measure such that 
$$
\prod_{p \leq x} \frac{N_p}{p}  \sim C (\log x)^r \hspace{2mm}  \text{ as }   x \rightarrow \infty \text{ with } x \not \in S,  
$$
where $r=\ord \limits _{s=1} L(E, s)$,  $C= \frac{r!}{L^{(r)} (E, 1) } \cdot \sqrt 2 e^{r \gamma}$, $\gamma$ is Euler's constant and $L^{ (r)}(E, s)$ is the $r$-th derivative of $L(E, s)$. 
\end{theorem}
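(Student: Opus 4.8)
The plan is to apply Theorem~\ref{mainthm} with the $x$-dependent choice $s = 1 + 1/x$ and to let $x \to \infty$, controlling each term on the right-hand side; the exceptional set $S$ will be exactly the one produced by Theorem~\ref{refinement}. For $x > 2$ one has $1 < \Re(1 + 1/x) < 3/2$, so Theorem~\ref{mainthm} applies, and since the implied constant in $O(\log x / x^{1/6})$ does not depend on $s$, that term is $o(1)$. Denote by $P(x)$ the partial Euler product on the left of Theorem~\ref{mainthm}. The first step is to check $\prod_{p \le x}\tfrac{N_p}{p} \sim P(x)^{-1}$: for $p \nmid N_E$ the Euler factor at $s = 1$ is $(1 - a_p p^{-1} + p^{-1})^{-1} = p/N_p$, while the ratio of the Euler factors at $s = 1$ and at $s = 1 + 1/x$ is $1 + O(\tfrac{\log p}{x\sqrt p})$ because $|a_p| \le 2\sqrt p$ and $|1 - p^{-1/x}| \ll \tfrac{\log p}{x}$; multiplying over $p \le x$ gives a factor $1 + O(x^{-1/2}) \to 1$, and the finitely many $p \mid N_E$ contribute $1 + O(1/x)$.

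The ``main'' terms are straightforward. By a Taylor expansion at the zero of order $r$, $L(E, 1 + 1/x) = \tfrac{L^{(r)}(E,1)}{r!}\,x^{-r}(1 + O(1/x))$. Writing $x^{1-s} = e^{-(\log x)/x}$ and invoking $\Li(e^v) = \gamma + \log|v| + \sum_{k \ge 1}\tfrac{v^k}{k\,k!}$ with $v = -(\log x)/x \to 0$, one finds $\Li(x^{1-s}) = \log\log x - \log x + \gamma + O(\tfrac{\log x}{x})$, hence $\exp(-r\,\Li(x^{1-s})) = e^{-r\gamma}(\log x)^{-r}x^{r}(1+o(1))$, and the factors $x^{\mp r}$ cancel. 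Finally $U_{1+1/x}(x) - U_1(x) \ll \tfrac{1}{x}\sum_{p \le x}\tfrac{\log p}{p} = O(\tfrac{\log x}{x})$, so Lemma~\ref{sqrt2} gives $\exp(U_s(x)) \to \tfrac{1}{\sqrt 2}$, which is the origin of the $\sqrt 2$ in $C$.

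The hard part will be showing $R_s(x) \to 0$ for $x \notin S$. The integral sum in $R_s(x)$ is easy: for real $z \ge s$ one has $|x^{\rho - z}(\rho - z)^{-2}| \le x^{1-z}\gamma^{-2}$, so each term is $\le \tfrac{x^{-1/x}}{\gamma^2 \log x}$ and, by Corollary~\ref{convergencefact}, the sum is $O(1/\log x)$, contributing $o(1)$. For the first sum $\Sigma_1(x) = \sum_{\rho \ne 1}\tfrac{x^{\rho - s}}{\rho - s}$, the idea is to use the elementary identity $\tfrac{1}{\rho - s} = \tfrac{1}{\rho} + \tfrac{s}{\rho(\rho - s)}$ to split
\[
\Sigma_1(x) = x^{-s}\sum_{\rho \ne 1}\frac{x^\rho}{\rho} \;+\; s\,x^{-s}\sum_{\rho \ne 1}\frac{x^\rho}{\rho(\rho - s)}.
\]
The second sum converges absolutely, since $|\rho - s| \ge |\gamma|$ and $\sum_\rho |\gamma|^{-2} < \infty$ by Corollary~\ref{convergencefact}, so the second piece is $O(x\,|x^{-s}|) = O(x^{-1/x}) = O(1)$. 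For the first piece one applies the Riemann--von Mangoldt explicit formula for $\psi_E$ (as in the proof of Theorem~\ref{refinement}), giving $\sum_{\rho}\tfrac{x^\rho}{\rho} = -\psi_E(x) + O(\log^2 x)$ over all nontrivial zeros; removing the $r$ copies of $\rho = 1$ and multiplying by $x^{-s} = x^{-1}x^{-1/x}$ gives
\[
x^{-s}\sum_{\rho \ne 1}\frac{x^\rho}{\rho} = -r - \frac{\psi_E(x)}{x} + o(1),
\]
so $\Sigma_1(x) = -\tfrac{\psi_E(x)}{x} + O(1)$. Invoking Theorem~\ref{refinement}, for $x \notin S$ one has $\psi_E(x) = O(x(\log\log x)^2)$, and therefore $R_s(x) = \Sigma_1(x)/\log x + o(1) = O\big((\log\log x)^2/\log x\big) \to 0$. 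The crux is that $\Sigma_1(x)$ is estimated \emph{directly} through the explicit formula for $\psi_E$ at the single point $x$, so only the pointwise bound of Theorem~\ref{refinement}, valid off $S$, is used; the bound $\psi_E(x) = O(x(\log x)^2)$ available unconditionally under RH would only yield $R_s(x) = O(\log x)$, which is precisely why the exceptional set cannot be removed here.

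Assembling the pieces, for $x \notin S$,
\[
\log P(x) = \log\tfrac{L^{(r)}(E,1)}{r!} - r\log x + \big(r\log x - r\log\log x - r\gamma\big) - \log\sqrt 2 + o(1),
\]
so $P(x) \sim \tfrac{L^{(r)}(E,1)}{r!\,\sqrt 2}\,e^{-r\gamma}(\log x)^{-r}$, whence $\prod_{p \le x}\tfrac{N_p}{p} \sim P(x)^{-1} \sim \tfrac{r!}{L^{(r)}(E,1)}\sqrt 2\,e^{r\gamma}(\log x)^{r} = C(\log x)^{r}$, as desired.
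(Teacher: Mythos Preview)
Your proof is correct and follows essentially the same approach as the paper: both set $s = 1 + 1/x$ in Theorem~\ref{mainthm}, reduce the left-hand side to $\prod_{p \le x} p/N_p$, handle $L(E,s)$, $\Li(x^{1-s})$ and $U_s(x)$ by Taylor expansion and Lemma~\ref{sqrt2}, and control $R_s(x)$ via the partial-fraction identity $\tfrac{1}{\rho-s} = \tfrac{1}{\rho} + \tfrac{s}{\rho(\rho-s)}$ together with the explicit formula for $\psi_E$ and Theorem~\ref{refinement}. Your write-up of the $R_s(x)$ step is in fact slightly cleaner than the paper's, which compresses the same splitting into a terse ``$O(x^{\rho-s})$'' error term.
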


\begin{proof}
The idea of the proof is to set $s=1+\frac{1}{x}$ and estimate the left-hand and right-hand sides of Corollary \ref{ramanujancor} as $x \to \infty$.  When $s=1+\frac{1}{x}$, we have that
\begin{align*}
\prod_ {\substack{p \leq x \\ p \nmid N_E}} (1-a_pp^{-s}+p^{1-2s})^{-1} &= \prod_ {\substack{p \leq x \\ p \nmid N_E}} (1-a_pp^{-1-1/x}+p^{-1-2/x})^{-1} \\
&= \prod_ {\substack{p \leq x \\ p \nmid N_E}} \left (1-a_pp^{-1} \left (1+ O \left ( \frac{\log p}{x} \right ) \right)+p^{-1} \left (1+ O \left ( \frac{\log p}{x} \right ) \right) \right)^{-1}  \\
&= \prod_ {\substack{p \leq x \\ p \nmid N_E}} (1-a_pp^{-1}+p^{-1})^{-1} \cdot \prod_ {\substack{p \leq x \\ p \nmid N_E}} \left(1 + O \left ( \frac{a_p \cdot \log p }{p  x}  \right) \right), 
\end{align*}
where we used the fact that 
$$
(1+a+O(f(x)))^{-1}= (1+a)^{-1} (1+O(f(x))) \textrm{ if } f(x)=o(1) \textrm{ and } a  \textrm{ is sufficiently small.  } 
$$
Now 
$$
 \prod_ {\substack{p \leq x \\ p \nmid N_E}} \left(1 + O \left ( \frac{a_p \cdot \log p }{p  x}  \right) \right) = 1+O \left (  \sum_ {\substack{p \leq x \\ p \nmid N_E}} \frac{|a_p| \cdot \log p}{p x} \right ) = 1+O \left ( \frac{\log x}{\sqrt x} \right) =1+o(1), 
$$
where we used the fact that $|a_p| \leq 2 \sqrt p$ and the classical estimate $\sum_{p \leq x} \frac{\log p}{p}=\log x +O(1)$. 
Thus, when $s=1+\frac{1}{x}$, 
\begin{equation*} 
\prod_ {\substack{p \leq x \\ p \nmid N_E}} (1-a_pp^{-s}+p^{1-2s})^{-1} \sim \prod_ {\substack{p \leq x \\ p \nmid N_E}} (1-a_pp^{-1}+p^{-1})^{-1} = \prod_ {\substack{p \leq x \\ p \nmid N_E}} \frac{p}{N_p}. 
\end{equation*}
An analogous argument shows that 
\begin{equation*} 
\prod_ {\substack{p \leq x \\ p | N_E}} (1-a_pp^{-s})^{-1} \sim \prod_ {\substack{p \leq x \\ p | N_E}} (1-a_pp^{-1})^{-1} = \prod_ {\substack{p \leq x  \\ p | N_E}} \frac{p}{N_p}. 
\end{equation*}
Combining the two previous equations yields that for $s=1+\frac{1}{x}$, 

\begin{equation} \label{lhs}
\prod_ {\substack{p \leq x \\ p | N_E}} (1-a_pp^{-s})^{-1} \prod_ {\substack{p \leq x \\ p \nmid N_E}} (1-a_pp^{-s}+p^{1-2s})^{-1} \sim \prod_ {p \leq x} \frac{p}{N_p}. 
\end{equation}

We now estimate the right-hand side of Corollary \ref{ramanujancor}.  Write 
$$
L(E, s)= a_r (s-1)^r+ a_{r+1} (s-1)^{r+1}+ \cdots 
$$
so that when $s=1+\frac{1}{x}$,  $L(E, s) \sim a_r \cdot \frac{1}{x^r}$ as $x \to \infty$. To estimate the contribution coming from the term $\Li(x^{1-s})$, we use the classical fact (see for instance \cite[pp.425]{Finch2003} or  \cite[Equation (2.2.5)]{Hardy1940}) that
$$
\Li(x) = \gamma + \log |\log x| +\sum_{n=1}^{\infty} \frac{ (\log x)^n}{n! \cdot n} \textrm{ for  all } x \in \mathbb  R_{>0} \setminus \{1\} .
$$
Applying this when $s=1+\frac{1}{x}$ yields that 
$$
\Li(x^{1-s}) = \gamma+ \log \left (\frac{\log x}{x} \right )+ o(1). 
$$

To estimate the contribution coming from the term $U_s(x)$ we note that $U_{1+\frac{1}{x}}(x) \sim U_1(x)$ since 
\begin{align*}
\sum_{{\substack{\sqrt x < p \leq x \\ p \nmid  N_E}}} \frac{(\alpha_p^2+\beta_p^2) }{2p^{2(1+\frac{1}{x}) }}&= \sum_{{\substack{\sqrt x < p \leq x \\ p \nmid  N_E}}} \frac{(\alpha_p^2+\beta_p^2) }{2p^{2}} \cdot \left(1+ O\left (\frac{\log p}{x} \right) \right) \\
&= \sum_{{\substack{\sqrt x < p \leq x \\ p \nmid  N_E}}} \frac{(\alpha_p^2+\beta_p^2) }{2p^{2}}+ O\left (\frac{1}{x} \sum_{\sqrt x < p \leq x} \frac{ \log p}{p} \right)= \sum_{{\substack{\sqrt x < p \leq x \\ p \nmid  N_E}}} \frac{(\alpha_p^2+\beta_p^2) }{2p^{2}}+ o(1). 
\end{align*}

Using Lemma \ref{sqrt2}, we conclude that when $s=1+\frac{1}{x}$, $U_s(x) \to \log \left( \frac{1}{\sqrt 2} \right)$ as $x \to \infty$. 

To estimate the contribution coming from the term $R_s(x)= \frac{1}{\log x}\sum \limits_{ \rho \neq 1} \frac{x^{\rho-s}}{ \rho-s}+\frac{1}{\log x} \sum \limits_{\rho \neq 1} \int_{s}^{\infty} \frac{x^{\rho-z}}{(\rho-z)^2}dz$  when $s=1+\frac{1}{x}$,  we begin by noting that 
\begin{equation*}
\sum_{\rho \neq 1} \frac{1}{\rho-s}-\sum_{\rho \neq 1} \frac{1}{\rho} = \sum_{\rho \neq 1} \frac{s}{ (\rho-s) \rho}
\end{equation*}
and using Corollary \ref{convergencefact},  we have that 
\begin{equation*}
\sum_{\rho \neq 1 } \frac{s}{ (\rho-s) \rho} \ll  \sum_{\rho} \frac{1}{|\rho|^2} < \infty. 
\end{equation*}
This implies that 
\begin{equation*}
\sum_{\rho \neq 1} \frac{1}{\rho-s} =\sum_{\rho} \frac{1}{\rho}+O(1). 
\end{equation*}

Since we are assuming the Riemann Hypothesis for $L(E, s)$, a similar argument to the one above yields
$$
\sum_{\rho \neq 1} \frac{x^{\rho-s}}{\rho-s}=\frac{1}{x^s} \sum_{\rho} \frac{x^\rho}{\rho}+O(x^{1-s}) \ll \frac{1}{x} \sum_{\rho} \frac{x^\rho}{\rho} +O(1). 
$$
Using the Riemann von-Mangoldt explicit formula for $L(E, s)$ (see  \cite[Equation 1.10]{Rubenstein 2013})
$$
\psi_E(x) =- \sum_{\rho} \frac{x^{\rho}}{\rho}+o(x^{1/2}), 
$$
it follows that 
$$
\sum_{\rho \neq 1} \frac{x^{\rho-s}}{\rho-s} \ll \frac{1}{x} \psi_E(x)+O(1). 
$$
By Theorem \ref{refinement}, we conclude that there exists a set $S$ of finite logarithmic measure such that $\text{as }   x \rightarrow \infty \text{ with } x \not \in S$, 
$$
\sum_{\rho \neq 1} \frac{x^{\rho-s}}{\rho-s} \ll \frac{1}{x} O( x (\log \log x)^2)+O(1) 
$$
and so $\text{as }   x \rightarrow \infty \text{ with } x \not \in S$, 
$$
\frac{1}{\log x} \sum_{\rho \neq 1} \frac{x^{\rho-s}}{\rho-s}  =o(1). 
$$
To estimate the second quantity in the definition of $R_s(x)$, we note that since $\sum \limits_{\rho} \frac{1}{|\rho|^2}$ converges by Corollary \ref{convergencefact}, 
\begin{align*}
\frac{1}{\log x} \sum_{\rho \neq 1} \int_{s}^{\infty} \frac{x^{\rho-z}}{(\rho-z)^2}dz &= O \left(\frac{1}{\log x} \cdot \int_ {s}^{\infty}  x^{1-\Re(z) } d|z| \right) \\
&= O \left ( \frac{x^{1 -\textrm{Re}(s)}}{\log^2 x}  \right) 
=o(1).  \\ 
\end{align*}
 Thus, in summary, when $s=1+\frac{1}{x}$, we conclude that $\text{as }   x \rightarrow \infty \text{ with } x \not \in S$, 
\begin{align*}
  & L(E, s)    \exp   \left (-r \cdot \Li(x^{1-s}) -R_s(x) 
+  U_s(x) +  O \left ( \frac{\log x}{x^{1/6}} \right) \right)  \\
& = L(E, s)  \exp \left ( -r\gamma - r \log \left (\frac{\log x}{x} \right )+ \log \left( \frac{1}{\sqrt 2} \right)+ o(1) \right ) \\
& \sim  \frac{a_r}{x^r} \exp \left (  -r\gamma -r \log \left (\frac{\log x}{x} \right)+ \log \left( \frac{1}{\sqrt 2} \right) \right) 
 = \frac{a_r}{\sqrt 2 \cdot e^{r \gamma} } \cdot \frac{1}{ (\log x)^r}. 
\end{align*}
Combining this with Equation \eqref{lhs} proves the theorem. 
\end{proof}

\begin{corollary} \label{cor2} 
Let $E/\mathbb Q$ be an elliptic curve. Suppose that $$\prod_{p \leq x} \frac{N_p}{p}  \sim C (\log x)^{\rk (E) }.$$ Then 
$$
\ord_{s=1} L(E, s)= \rk (E). 
$$

\end{corollary}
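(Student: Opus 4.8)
The plan is to deduce the statement directly from Theorem \ref{appln}, using Goldfeld's theorem to supply the hypothesis needed by that result. First I would observe that, by Goldfeld \cite{Goldfeld1982}, the assumption $\prod_{p\leq x}\frac{N_p}{p}\sim C(\log x)^{\rk(E)}$ (with $C$ a positive constant, as in Conjecture \ref{OBSD}) already forces the Riemann Hypothesis for $L(E,s)$. Granting this, Theorem \ref{appln} applies and produces a subset $S\subseteq\mathbb{R}_{\geq 2}$ of finite logarithmic measure, together with $r=\ord_{s=1}L(E,s)$ and $C'=\frac{r!}{L^{(r)}(E,1)}\sqrt{2}\,e^{r\gamma}$, such that $\prod_{p\leq x}\frac{N_p}{p}\sim C'(\log x)^{r}$ as $x\to\infty$ along $x\notin S$.

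The second step is to compare the two asymptotics. Since $\int_{2}^{\infty}\frac{dt}{t}=\infty$ while $\mu^{\times}(S)<\infty$, the complement $\mathbb{R}_{\geq 2}\setminus S$ is unbounded, so I can pick $x_n\to\infty$ with $x_n\notin S$. Along this sequence both $\prod_{p\leq x_n}\frac{N_p}{p}\sim C(\log x_n)^{\rk(E)}$ and $\prod_{p\leq x_n}\frac{N_p}{p}\sim C'(\log x_n)^{r}$ hold, whence
\[
\frac{C}{C'}\,(\log x_n)^{\rk(E)-r}\longrightarrow 1\qquad\text{as }n\to\infty .
\]
As $\log x_n\to\infty$, the left-hand side tends to $0$ if $\rk(E)<r$ and to $\infty$ if $\rk(E)>r$; hence $\rk(E)=r=\ord_{s=1}L(E,s)$ (and, as a bonus, $C=C'$, which recovers Goldfeld's value of the constant).

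I do not anticipate a serious technical obstacle, since all the analytic work is already contained in Theorem \ref{appln} and in \cite{Goldfeld1982}; the delicate points are minor. One is that $C$ must be nonzero for the comparison in the second step to be meaningful — this is part of the content of Conjecture \ref{OBSD}, and in any case the partial products are positive reals. Another is the elementary remark that a set of finite logarithmic measure cannot contain a tail $[T,\infty)$, so the complement of $S$ reaches arbitrarily large $x$. Finally, should one wish to avoid quoting Goldfeld's Riemann Hypothesis statement, one can instead extract from the hypothesis and the generalized Mertens estimate \eqref{Mertens} the relation $\sum_{p\leq x}\frac{a_p}{p}=\big(\tfrac12-\rk(E)\big)\log\log x+O(1)$; but pinning down the size of $\sum_{p\leq x}\frac{a_p}{p}$ at this level of precision is itself of Riemann Hypothesis strength, so this reformulation does not circumvent the need for that hypothesis, and I would present the argument as above.
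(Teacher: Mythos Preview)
Your proposal is correct and follows essentially the same approach as the paper: invoke the fact (due to Goldfeld) that the asymptotic hypothesis forces the Riemann Hypothesis for $L(E,s)$, then apply Theorem~\ref{appln} and compare exponents. The paper's proof differs only cosmetically---it leaves your comparison step implicit (simply stating that the conclusion ``follows directly'' from Theorem~\ref{appln}), and it includes, for completeness, a short self-contained partial-summation argument showing that the hypothesis implies the Riemann Hypothesis rather than citing Goldfeld as a black box.
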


\begin{proof}
Assuming the Riemann Hypothesis for $L(E, s)$, this follows directly from Theorem \ref{appln}.  It is known that  (see \cite[Theorem 2(a)]{Goldfeld1982} or \cite[pp.334]{KuoMurty2005}) that the asymptotic $\prod_{p \leq x} \frac{N_p}{p}  \sim C (\log x)^{\rk (E(\mathbb Q))}$ implies the Riemann Hypothesis for $L(E, s)$, so the corollary is true unconditionally. For completeness, we explain the proof of this implication here. Note that 
$$
 \prod_{p \leq x} \frac{N_p}{p} = \prod_{p|N_E}\left(1-\frac{a_p}{p} \right)  \prod_{p \nmid N_E} \left(1-\frac{\alpha_p}{p} \right) \left(1-\frac{\beta_p}{p} \right). 
$$
Taking logarithms on both sides yields and computing the remainder term (see \cite[Lemma 1]{KuoMurty2005} for details) yields that 
$$
\log \prod_{p \leq x} \frac{N_p}{p}= \sum_{n \leq x}c_n+ O(1), 
$$
where \[ c_n= \begin{cases} 
      -\frac{(\alpha_p^k+\beta_p^k)}{kp^k} & \textrm{ if } n=p^k \textrm{ and } p \nmid N_E \\
     -\frac{a_p^k}{k p^k}  &  \textrm{ if } n=p^k \textrm{ and } p | N_E \\
      0 & \textrm{otherwise.}
   \end{cases}
\]
Thus, by hypothesis, $\sum \limits _{n \leq x} c_n =O( \rk (E) \cdot \log \log x)$ and partial summation yields that 
$$
\log L(E,s+1)= -\sum_{n=1}^{\infty} \frac{c_n}{n^s}= -s \int_{1}^{\infty} (\sum_{n \leq x}c_n)  x^{-(s+1)}dx \textrm{ for all } s \in \mathbb C \textrm{ with } \Re(s)>1/2. 
$$
Hence, $\log L(E, s+1)$ is holomorphic for all $s \in \mathbb C$ with $\Re(s)>0$ and so $L(E, s) \neq 0$ for all $s \in \mathbb C$ with $\Re(s)>1$. 
\end{proof}

\begin{corollary} \label{cor1}
Let $E/\mathbb Q$ be an elliptic curve. Assume that $\ord \limits _{s=1} L(E, s)= \rk(E)$ and the Riemann Hypothesis for $L(E, s)$. Then there exists a subset $S \subseteq \mathbb R_{\geq 2}$ of finite logarithmic measure such that
$$
\prod_{p \leq x} \frac{N_p}{p}  \sim C (\log x)^{\rk(E)} \hspace{2mm}  \text{ as }   x \rightarrow \infty \text{ with } x \not \in S, 
$$
where $C= \frac{r!}{L^{(r)} (E, 1) } \cdot \sqrt 2 e^{r \gamma}$, $\gamma$ is Euler's constant and $L^{ (r)}(E, s)$ is the $r$-th derivative of $L(E, s)$ with $r=\ord \limits _{s=1}L(E, s)$. 
\end{corollary}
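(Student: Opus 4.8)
The plan is to read this off directly from Theorem \ref{appln}. That theorem, assuming only the Riemann Hypothesis for $L(E,s)$, already produces a subset $S \subseteq \mathbb R_{\geq 2}$ of finite logarithmic measure such that
\[
\prod_{p \leq x} \frac{N_p}{p} \sim C (\log x)^r \qquad (x \notin S),
\]
with $r = \ord_{s=1} L(E,s)$ and $C = \frac{r!}{L^{(r)}(E,1)} \sqrt 2\, e^{r\gamma}$. The hypothesis of the present corollary supplies precisely the additional identity $\ord_{s=1} L(E,s) = \rk(E)$, i.e. $r = \rk(E)$. Substituting this into the exponent on the right-hand side, while leaving the constant $C$ expressed in terms of $r$, $L^{(r)}(E,1)$ and $\gamma$ exactly as in Theorem \ref{appln}, yields the asserted asymptotic $\prod_{p \leq x} N_p/p \sim C (\log x)^{\rk(E)}$ off the very same exceptional set $S$. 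No new analytic input is required.

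For bookkeeping it is worth recording why the constant has the stated shape, so that the displayed $C$ matches what was actually computed. Writing the Taylor expansion $L(E,s) = a_r (s-1)^r + a_{r+1}(s-1)^{r+1} + \cdots$ at $s=1$, one has $L^{(r)}(E,1) = r!\, a_r$. In the proof of Theorem \ref{appln}, evaluating the right-hand side of Theorem \ref{mainthm} at $s = 1 + \tfrac1x$ gives $\prod_{p \leq x} \tfrac{p}{N_p} \sim \frac{a_r}{\sqrt 2\, e^{r\gamma}} (\log x)^{-r}$; inverting and using $a_r = L^{(r)}(E,1)/r!$ produces exactly $C = \frac{r!}{L^{(r)}(E,1)} \sqrt 2\, e^{r\gamma}$. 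Hence the constant in the statement above is literally the one already obtained.

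The upshot is that there is no substantive obstacle at this step: all the delicate work — the controlled limit $s = 1 + \tfrac1x \to 1$ in Theorem \ref{mainthm}, the estimation of the zero-sum $R_s(x)$, and above all the refined bound $\psi_E(x) = O(x (\log\log x)^2)$ outside a set of finite logarithmic measure established in Section \ref{sec3} — was already carried out inside the proof of Theorem \ref{appln}. The present corollary is the specialization of that result to the case where Conjecture \ref{BSD} is assumed, and as such is the promised partial converse to Corollary \ref{OBSDimpliesBSD}: the Riemann Hypothesis for $L(E,s)$ together with $\ord_{s=1}L(E,s) = \rk(E)$ implies the original Birch and Swinnerton-Dyer asymptotic off a set of finite logarithmic measure.
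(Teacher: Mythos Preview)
Your argument is correct and matches the paper's own proof, which simply states that the corollary follows from Theorem \ref{appln} once one identifies $\ord_{s=1}L(E,s)$ with $\rk(E)$. The additional remarks you include about the shape of the constant $C$ and the ingredients from Section \ref{sec3} are accurate but not needed beyond what is already established in Theorem \ref{appln}.
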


\begin{proof}
This follows from Theorem  \ref{appln} once we have identified $\ord \limits_ {s=1} L(E, s)$ with $\rk(E)$. 
\end{proof}

\end{document}